\DeclareMathOperator{\dens}{dens}
\DeclareMathOperator{\Red}{Red}
\DeclareMathOperator{\Edge}{Edge}
\newcommand{\moinsun}{^{-1}}
\newcommand{\tendversl}{\xrightarrow[\ell\to\infty]{}}
\newcommand\flr[1]{\lfloor #1 \rfloor}
\def\Pr{\mathbf{Pr}}
\newcommand\Cond[2]{\left( #1 \;\middle|\; #2 \right)}
\def\tilde{\widetilde}
\newtheorem{thm}{Theorem}[section]
\newtheorem{defi}[thm]{Definition}
\newtheorem{lem}[thm]{Lemma}
\newtheorem{prop}[thm]{Proposition}
\newtheorem{cor}[thm]{Corollary}
\theoremstyle{remark}
\newtheorem{rem}[thm]{Remark}
\title{Phase transition for the existence of van Kampen 2-complexes in random groups}
\author{\textsc{Tsung-Hsuan Tsai}}
\affil{Institut de Recherche Mathématique Avancée\\
Strasbourg University\\
\small{\textit{E-mail:} tsai71517@gmail.com}}
\date{}
\begin{document}
\maketitle

\begin{abstract}
    Gromov showed that \cite{Gro93} with high probability, every bounded and reduced van Kampen diagram $D$ of a random group at density $d$ satisfies the isoperimetric inequality $|\partial D|\geq (1-2d-s)|D|\ell$. In this article, we adapt Gruber-Mackay's prove \cite{GM18} for random triangular groups, showing a non-reduced 2-complex version of this inequality.\\
    
    Moreover, for any 2-complex $Y$ of a given geometric form, we exhibit a phase transition: we give explicitly a critical density $d_c$ depending only on $Y$ such that, in a random group at density $d$, if $d<d_c$ then there is no reduced van Kampen 2-complex of the form $Y$; while if $d>d_c$ then there exists reduced van Kampen 2-complexes of the form $Y$.\\
    
    As an application, we show a phase transition for the $C(p)$ small-cancellation condition: for a random group at density $d$, if $d<1/(p+1)$ then it satisfies $C(p)$; while if $d>1/(p+1)$ then it does not satisfy $C(p)$.
\end{abstract}
\tableofcontents

\section{Introduction}

\paragraph{Random groups.}
The first mention of random group presentations is the density model by M. Gromov in \cite[9.B]{Gro93}. Formally, a random group is a random variable with values in a given set of groups, often constructed by group presentations with a fixed set of generators and a random set of relators. The goal is to study the asymptotic behaviors of a sequence of random groups $(G_\ell)$ when the maximal relator lengths $\ell$ goes to infinity. We say that $G_\ell$ satisfies some property $Q_\ell$ \textit{asymptotically almost surely} (a.a.s.) if the probability that $G_\ell$ satisfies $Q_\ell$ converges to $1$ as $\ell$ goes to infinity.

Let us consider the \textit{permutation invariant density model} of random groups introduced by Gromov in \cite[p. 272]{Gro93} and developed in \cite{Tsa21}. Fix the set of generators $X_m=\{x_1,\dots,x_m\}$ with $m\geq 2$ for group presentations. Let $B_\ell$ be the set of cyclically reduced words of $X_m^\pm$ of length at most $\ell$. We shall construct random groups by \textit{densable} and \textit{permutation invariant} random subsets of $B_\ell$.

\begin{defi}[{\cite[p.272]{Gro93}}, {\cite[Definition 1.5 and Definition 2.5]{Tsa21}}] A sequence of random subsets $(R_\ell)$ of the sequence of sets $(B_\ell)$ is called \textit{densable with density} $d\in \{-\infty\} \cup [0,1]$ if the sequence of random variables $\dens_{B_\ell}(R_\ell) := \log_{|B_\ell|}(|R_\ell|)$ converges in probability to the constant $d$.

The sequence $(R_\ell)$ is called \textit{permutation invariant} if $R_\ell$ is a permutation measure-invariant random subset of $B_\ell$.
\end{defi}

Many natural models of random subsets are densable and permutation invariant. For example, the uniform distribution on all subsets of cardinality $\flr{B_\ell}^d$ considered in \cite{Oll04},\cite{Oll05} and \cite{Oll07}, or the Bernoulli sampling of parameter $|B_\ell|^{d-1}$ considered in \cite{ALS15} for random triangular groups.

\begin{defi}[{\cite[p.273]{Gro93}}, {\cite[Definition 4.1]{Tsa21}}] A sequence of random groups $(G_\ell(m,d))$ with $m$ generators at density $d$ is defined by
\[G_\ell(m,d) = \langle X_m|R_\ell\rangle\] 
where $(R_\ell)$ is a densable sequence of permutation invariant random subsets of $(B_\ell)$ with density $d$.
\end{defi}

For detailed surveys on random groups, we refer the reader to \cite{Ghy04} by E. Ghys, \cite{Oll05} by Y. Ollivier, \cite{KS08} by I. Kapovich and P. Schupp, and \cite{BNW20} by F. Bassino, C. Nicaud and P. Weil.

\paragraph{Isoperimetric inequalities.} 

In order to prove the hyperbolicity of a random group at density $d<1/2$, Gromov showed in \cite[9.B]{Gro93} that a.a.s. \textit{reduced} local van Kampen diagrams of $G_\ell(m,d)$ satisfy an isoperimetric inequality depending on the density $d$.

\begin{thm}[{\cite[p.274]{Gro93}}, {\cite[Chapter 2]{Oll04}}]\label{inequality for diagrams} Let $(G_\ell(m,d))$ be a sequence of random groups with $m\geq2$ generators at density $d$. For any $\varepsilon>0$ and $K>0$, a.a.s. every \textbf{reduced} van Kampen diagram $D$ of $G_\ell(m,d)$ with $|D|\leq K$ satisfies the isoperimetric inequality
\[|\partial D|\geq (1-2d-\varepsilon)|D|\ell.\]
\end{thm}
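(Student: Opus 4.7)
The plan is to prove Theorem~\ref{inequality for diagrams} by a first-moment estimate on the number of ``bad'' reduced van Kampen diagrams, i.e.\ those violating the desired inequality. Concretely, for each $n \in \{1, \dots, \lfloor K \rfloor\}$ I would bound from above the expected number of reduced diagrams $D$ of $G_\ell(m,d)$ with $|D| = n$ faces and $|\partial D| < (1 - 2d - \varepsilon)n\ell$, show that this bound tends to $0$ as $\ell \to \infty$, and then sum over the finitely many values of $n$ and apply Markov's inequality.

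The first step is to enumerate the \emph{combinatorial shapes} of the underlying $2$-complex: the incidence structure of $n$ face-polygons of perimeter at most $\ell$, with some boundary edges glued in pairs to form internal edges. Since $n \leq K$ is bounded, the number of such shapes, parametrized by the number $I$ of internal edges together with the adjacency pattern, is at most a constant $C(K)$ independent of $\ell$; moreover the identity $|\partial D| + 2I = n\ell$ holds (up to a minor error absorbed into $\varepsilon$ coming from the fact that relators have length at most $\ell$ rather than exactly $\ell$).

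The second step is to count, for a fixed shape, the number of compatible labelings of the edges by $X_m^\pm$ such that each face reads a cyclically reduced word. A sequential labeling of the $|\partial D| + I$ distinct edges yields at most $2m \cdot (2m-1)^{|\partial D|+I-1}$ possibilities, since each newly labeled edge contributes at most $2m-1$ choices by the cyclic reduction condition along its face. Each labeling produces an $n$-tuple of relators in $B_\ell$. Using the permutation-invariant densable hypothesis together with the combinatorial tools developed in~\cite{Tsa21}, the probability that $n$ specified distinct words of $B_\ell$ all lie in $R_\ell$ is at most $|B_\ell|^{n(d-1)+o(1)}$. Since $|B_\ell| = (2m-1)^{\ell(1+o(1))}$, the expected number of reduced diagrams of a fixed shape with boundary length $|\partial D|$ is bounded above by
\[
C(K) \cdot (2m-1)^{|\partial D|+I+n\ell(d-1)+o(n\ell)}
= C(K) \cdot (2m-1)^{\frac{1}{2}\bigl(|\partial D|-(1-2d)n\ell\bigr)+o(n\ell)}.
\]
When $|\partial D| < (1-2d-\varepsilon)n\ell$, the exponent is at most $-\tfrac{\varepsilon}{2} n\ell + o(n\ell)$, so the expected count tends to $0$ exponentially fast. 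Summing over the polynomially-many values of $|\partial D|$ and the finitely-many values of $n \leq K$ concludes via Markov's inequality.

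The main obstacle will be the last probability estimate: in the Bernoulli model of~\cite{ALS15} it factorizes cleanly as $(|B_\ell|^{d-1})^n$, but in the general permutation-invariant densable setting one must invoke the machinery of~\cite{Tsa21} to convert the density assumption into the sharp bound $|B_\ell|^{n(d-1)+o(1)}$. A secondary subtlety is the reduction condition on the diagram: the $(2m-1)$-choice bound in the labeling step uses cyclic reducedness of each face, while the ``no cancellation between adjacent faces'' condition is needed to ensure that the counting only runs over genuinely non-simplifiable diagrams, preventing overcounting by trivial reductions along internal edges.
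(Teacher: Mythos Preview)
Your overall first-moment strategy matches the standard approach of Ollivier \cite{Oll04}, which this paper follows in Section~\ref{section vk 2-complexes} to prove the more general Theorem~\ref{inequality for 2-complex} (Theorem~\ref{inequality for diagrams} is then the reduced, planar special case, using $\Red(D)=0$ and the remark after Definition~\ref{complexity} that $|D|\le K$ forces complexity at most $6K$). The arithmetic in your second step is also correct \emph{provided} the $n$ faces of $D$ bear $n$ pairwise distinct relators. But a reduced diagram can have several faces labeled by the same relator: reducedness only forbids two adjacent faces sharing a relator with the common edge at the \emph{same position}. When only $k<n$ distinct relators appear, the probability that they all lie in $R_\ell$ is of order $(2m-1)^{k(d-1)\ell}$, which is strictly \emph{larger} than the $(2m-1)^{n(d-1)\ell}$ you invoke (since $d-1<0$). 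Your bound on the expected count is therefore too small in this case and the argument as written does not close.

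There is of course a compensating effect---repeated relators impose extra constraints on the edge labeling, so fewer labelings are admissible---but quantifying this tradeoff is exactly the non-trivial step. The paper (following \cite{Oll04}) handles it by passing to \emph{abstract} van Kampen $2$-complexes, which additionally record which faces share an abstract relator. If the abstract relators have multiplicities $\alpha_1\ge\dots\ge\alpha_k$ and $\eta_i$ denotes the number of free-to-fill letters of the $i$-th one, the key combinatorial input is the weighted inequality $\sum_i \alpha_i\eta_i \le |Y^{(1)}|$ for reduced $Y$ (Lemma~\ref{alphai etai 2} with $\Red=0$), and an Abel-summation argument over the nested sub-complexes $\tilde Y_1\subset\dots\subset\tilde Y_k$ (Lemma~\ref{key lemma}) converts this into the required exponential bound on the fillability probability. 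Your sketch is precisely the case $\alpha_i\equiv 1$, where $\sum_i\eta_i\le |Y^{(1)}|$ is immediate; what is missing is the stratification by abstract-relator multiplicities and the Abel summation that handles the general case.
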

Ollivier's proof in \cite{Oll04} can achieve a slightly stronger\footnote{Note that every van Kampen diagram composed of relators of lengths at most $\ell$ satisfies $2|D^{(1)}|-|\partial D|\leq |D|\ell$, so the given inequality implies the isoperimetric inequality.} inequality \[|D^{(1)}| \geq (1-d-\frac{\varepsilon}{2})|D|\ell.\]

One may expect such an inequality to hold for \textit{every} reduced van Kampen 2-complex $Y$ with $|Y|\leq K$. In \cite[Section 2]{GM18}, D. Gruber and J. Mackay showed that in the triangular model of random groups\footnote{A model that the relator length $\ell = 3$ is fixed, and we are interested in asymptotic behaviors when the number of generators $m$ goes to infinity.}, the above inequality holds for every \textit{non-reduced van Kampen 2-complexes} $Y$ with $|Y|\leq K$ if the \textit{reduction degree} (Definition \ref{reduction degree}) $\Red(Y)$ is added in the left hand side of the inequality.\\

However, the result fails in the regular Gromov density model: the condition $|Y|\leq K$ is not enough (see Remark \ref{why bounded complexity}). In Section 2 of this paper, we introduce the notion of \textit{complexity} (Definition \ref{complexity}) to adapt Gruber-Mackay's inequality in the Gromov density model, establishing a \textit{non-reduced van Kampen 2-complex} version of Theorem \ref{inequality for diagrams}.

\begin{thm} \label{inequality for 2-complex} Let $(G_\ell(m,d))$ be a sequence of random groups with $m\geq2$ generators at density $d$. Let $\varepsilon>0$, $K>0$. For any $d<1/2$, a.a.s. every van Kampen 2-complex $Y$ of complexity $K$ of $G_\ell(m,d)$ satisfies 
\[|Y^{(1)}|+\Red(Y)\geq (1-d-\varepsilon)|Y|\ell.\]
\end{thm}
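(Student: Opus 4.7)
The plan is to carry out a first-moment argument in the spirit of Ollivier's proof of Theorem \ref{inequality for diagrams}, adapted to accommodate non-reduced 2-complexes via the reduction-degree correction introduced by Gruber-Mackay in the triangular setting. The crucial preliminary observation, which motivates Definition \ref{complexity}, is that the set of abstract combinatorial types (``forms'') of van Kampen 2-complexes of complexity at most a fixed $K$ is finite, independent of $\ell$; without this finiteness, summing over all $Y$ with merely $|Y|\leq K$ would diverge in the density model (cf.\ Remark \ref{why bounded complexity}). This reduces the problem to a uniform per-form estimate, followed by a bounded union bound over forms.

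For a fixed form $\mathcal{F}$, I would count the number of labelings of its $1$-skeleton by elements of $X_m^\pm$ and estimate the probability that every $2$-cell boundary lies in $R_\ell$. Each reduction pair contributes only one relator constraint for two cells (the two boundaries are the same cyclic word read in opposite directions), so the number of independent relator constraints is of order $|Y|-\Red(Y)$ rather than $|Y|$. Using the densability of $(R_\ell)$ --- which yields that $t$ prescribed cyclically reduced words of length $\sim\ell$ all lie in $R_\ell$ with probability at most $|B_\ell|^{-t(1-d-o(1))}=m^{-t\ell(1-d-o(1))}$ --- the expected number of labeled realizations of $\mathcal{F}$ with prescribed $|Y^{(1)}|$ and $\Red(Y)$ is bounded above by
\[ m^{|Y^{(1)}|+\Red(Y)-(1-d-o(1))|Y|\ell}. \]
Whenever $|Y^{(1)}|+\Red(Y)<(1-d-\varepsilon)|Y|\ell$, this expectation is at most $m^{-\varepsilon|Y|\ell/2}$, which is summable over the finitely many forms and vanishes as $\ell\to\infty$. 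Markov's inequality then gives the a.a.s.\ conclusion.

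The hard part will be the correct bookkeeping of edge labelings versus relator constraints in the non-reduced setting. One must argue that each reduction pair, while freeing one relator constraint, contributes to the flexibility of edge labeling in precisely the quantified way, so that the exponent $|Y^{(1)}|+\Red(Y)$ arises naturally; this is exactly where Gruber-Mackay's combinatorial analysis of reductions must be transposed from the triangular to the density framework, and where the definition of complexity must interact cleanly with the reduction pattern (so that for each fixed $K$ both the number of underlying forms \emph{and} the number of admissible reduction patterns remain bounded). A secondary technical point is to handle the mild extra generality of the permutation-invariant densable model rather than the uniform or Bernoulli case, using the orbit-size estimates developed in \cite{Tsa21}.
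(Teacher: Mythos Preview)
Your high-level strategy --- first-moment/union bound over abstract combinatorial types, with a per-type probability estimate --- matches the paper's. The gap is in how you obtain the per-type exponent.

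The claim that ``the number of independent relator constraints is of order $|Y|-\Red(Y)$'' is wrong: $\Red(Y)$ is an \emph{edge}-count (Definition~\ref{reduction degree}), not a count of reducible face-pairs, and can exceed $|Y|$ by a factor of order $\ell$. Moreover, even on its own terms your arithmetic does not yield the exponent you write down: bounding the number of edge-labelings by $(2m)^{|Y^{(1)}|}$ and the probability that $t$ prescribed distinct words lie in $R_\ell$ by $(2m-1)^{-t(1-d)\ell}$ gives exponent $|Y^{(1)}|-t(1-d)\ell$, and no substitution $t=|Y|-\Red(Y)$ turns this into $|Y^{(1)}|+\Red(Y)-(1-d)|Y|\ell$.

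The paper reaches the correct exponent by a different device. One first fixes an \emph{abstract} labeling of faces by $k$ abstract relators with multiplicities $\alpha_1\ge\cdots\ge\alpha_k$ (so $\sum_i\alpha_i=|Y|$, but $k$ may be much smaller than $|Y|$). A ``free-to-fill'' argument (Definition~\ref{free to fill}, Lemma~\ref{for sub 2-complexes}) bounds the number of fillings by $\prod_i(2m-1)^{\eta_i}$, and the combinatorial inequality $\sum_i\alpha_i\eta_i\le|Y^{(1)}|+\Red(\tilde Y)$ (Lemma~\ref{alphai etai 2}) is where the reduction degree actually enters. The naive bound on the fillability probability thus has exponent $\sum_i\eta_i+k(d-1)\ell$, which still involves $k$ and the \emph{unweighted} sum $\sum_i\eta_i$ rather than $|Y|$ and the \emph{weighted} sum $\sum_i\alpha_i\eta_i$. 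The conversion is done by an Abel summation over the filtration $\tilde Y_1\subset\cdots\subset\tilde Y_k=\tilde Y$ by the first $i$ abstract relators (Lemma~\ref{key lemma}): since fillability of $\tilde Y$ implies fillability of each $\tilde Y_i$, one has $\log P_k\le\log P_i$ for all $i$, and summing against the weights $\alpha_i-\alpha_{i+1}\ge 0$ yields $|Y|\log P_k\le\sum_i\alpha_i\eta_i+|Y|(d-1+\tfrac{\varepsilon}{2})\ell$. This filtration-plus-Abel step is the heart of the proof and is absent from your sketch; the intuition ``each reduction pair frees one relator constraint'' does not capture it.

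A minor point: the number of abstract 2-complexes of complexity $K$ with face boundary lengths at most $\ell$ is not bounded independently of $\ell$ but only polynomially, by $\ell^{3K}$ (Lemma~\ref{number of abstract 2-complexes}); the union bound still works because the per-complex probability is $O((2m-1)^{-\varepsilon\ell/2})$, exponentially small in $\ell$.
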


\paragraph{Phase transition for the existence of van Kampen 2-complexes.} We are now interested in the converse of Theorem \ref{inequality for 2-complex}: Given a 2-complex $Y$ satisfying the inequality of Theorem \ref{inequality for 2-complex}, is it true that a.a.s. there exists a \textit{reduced} van Kampen 2-complex of $G_\ell(m,d)$ whose underlying 2-complex is $Y$?

A 2-complex $Y$ is said to be \textit{fillable} by a group presentation $G = \langle X |R \rangle$ (or by the set of relators $R$) if there exists a \textit{reduced} van Kampen 2-complex of $G$ whose underlying 2-complex is $Y$. An edge of a 2-complex is called \textit{isolated} if it is not adjacent to any face. Since isolated edges do not affect fillability, we will only consider finite 2-complexes without isolated edges in the following.\\

To better formulate the problem, we consider a sequence of 2-complexes $(Y_\ell)$ and introduce the notion of \textit{geometric form} of 2-complexes $(Y,\lambda)$ (Definition \ref{geometric form}), together with its density $\dens Y$ and its \textit{critical density} $\dens_c Y$ (Definition \ref{def density of Y}). The main result of this article is the phase transition at density $1- \dens_c(Y)$, for the fillability of the 2-complex $Y_\ell$.

\begin{thm}\label{existence of 2-complexes}
Let $(G_\ell(m,d))$ be a sequence of random groups with $m\geq2$ generators at density $d$. Let $(Y_\ell)$ be a sequence of 2-complexes with some geometric form $(Y,\lambda)$.

\begin{enumerate}[(i)]
    \item If $d < 1 - \dens_c Y$, then a.a.s. $Y_\ell$ is not fillable by $G_\ell(m,d)$.
    \item If $d > 1 - \dens_c Y$ and $Y_\ell$ is fillable by $B_\ell$, then a.a.s. $Y_\ell$ is fillable by $G_\ell(m,d)$.
\end{enumerate}
\end{thm}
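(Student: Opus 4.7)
The plan is to prove the two directions by complementary probabilistic methods: a first-moment / union bound for (i), and a second-moment argument for (ii). Both are driven by the interpretation of $\dens_c Y$ (Definition \ref{def density of Y}) as the critical value at which the expected number of fillings of the ``most restrictive'' sub-complex of $Y_\ell$ transitions from $o(1)$ to growing to infinity; I will assume $\dens_c Y$ is defined so as to capture exactly this sub-complex threshold.

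For part (i), let $Y' \subseteq Y$ be a sub-complex realizing the critical density, so that the corresponding sequence $(Y'_\ell)$ has the tightest fillability constraint among sub-complexes of $(Y_\ell)$. Any filling of $Y_\ell$ restricts to a filling of $Y'_\ell$, so it suffices to show a.a.s.\ $Y'_\ell$ is not fillable. The expected number of fillings of $Y'_\ell$ by $R_\ell$ factors as (number of valid edge-labellings of its 1-skeleton in $X_m^{\pm}$) times (probability that every face-boundary word lies in $R_\ell$); using the densability of $(R_\ell)$ and the permutation-invariance machinery of \cite{Tsa21}, this expectation can be written as $|B_\ell|^{\alpha + o(1)}$ with $\alpha<0$ precisely when $d < 1 - \dens_c Y$. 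Markov's inequality then concludes.

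For part (ii), let $N_\ell$ be the number of reduced fillings of $Y_\ell$ by $R_\ell$. The same expectation computation, now applied to $Y_\ell$ itself, gives $\esp{N_\ell}\to\infty$ under $d > 1 - \dens_c Y$; the assumption that $Y_\ell$ is fillable by $B_\ell$ guarantees that the set of fillings being counted is combinatorially non-empty, so the expectation is not vacuous. I would then estimate $\esp{N_\ell^2}$ by stratifying ordered pairs of fillings according to the combinatorial type of their overlap, namely a 2-complex obtained by gluing two copies of $Y$ along the 2-cells labelled by shared relators. The minimality built into $\dens_c Y$ controls the density of each overlap type, so each stratum contributes at most $\esp{N_\ell}^2$ up to subexponential factors; summing over the finitely many overlap types (all determined by the fixed geometric form $(Y,\lambda)$) yields $\esp{N_\ell^2}=(1+o(1))\esp{N_\ell}^2$, and the Paley--Zygmund inequality gives $\Pr(N_\ell>0)\to1$.

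The main obstacle is the second-moment analysis in (ii): one must enumerate all combinatorial overlap types, ensure that the reducedness condition in the definition of $N_\ell$ does not cause pathological cancellations that shrink the count dramatically, and transfer Bernoulli-style heuristics to the more general densable, permutation-invariant setting. Theorem \ref{inequality for 2-complex} is expected to play a central role here, since it rules out unexpectedly dense reduced sub-2-complexes and thus certifies that the densities of the glued overlap complexes remain bounded by the critical threshold. Controlling overlaps and reductions simultaneously --- all while working in the densable model rather than a Bernoulli one --- is the technical heart of the argument.
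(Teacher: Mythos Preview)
Your overall two-step plan is correct and matches the paper's strategy: part (i) is a first-moment argument and part (ii) is a second-moment argument with stratification by overlap. The paper packages (i) as a direct application of Theorem~\ref{inequality for 2-complex} (itself a union bound) to a sub-complex $Z\leq Y$ realizing the minimum density, and packages (ii) via the \emph{multidimensional intersection formula} from \cite{Tsa21}: one views the set $\mathcal{Y}_\ell \subset B_\ell^{(k)}$ of $k$-tuples of pairwise distinct relators filling $Y_\ell$ as a deterministic set of density $\dens Y$, verifies a ``$d$-small self-intersection'' condition on its overlap strata $S_{i,\ell}$ (pairs of fillings sharing exactly $i$ relators), and invokes the formula to conclude $\mathcal{Y}_\ell \cap R_\ell^{(k)} \neq \varnothing$ a.a.s. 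That formula is precisely the second-moment/Paley--Zygmund machinery adapted to the permutation-invariant model, so your instinct there is right.

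However, your expectation that Theorem~\ref{inequality for 2-complex} plays a central role in part (ii) is misplaced. The self-intersection bound (Lemma~\ref{d-small self intersection}) is purely combinatorial and deterministic: if two fillings share the $i$ relators occupying a sub-complex $W \leq Y$ in the second copy, the density loss in counting such pairs is $\frac{i}{2k}\dens W$, and the hypothesis $\dens_c Y > 1-d$ gives $\dens W > 1-d$ directly from Definition~\ref{def density of Y}, with no appeal to the random group. Theorem~\ref{inequality for 2-complex} is used only for part~(i). Relatedly, your description of overlaps as ``gluing two copies of $Y$ along shared 2-cells'' is slightly off: the $i$ shared relators may sit in \emph{different} sub-complexes $Z,W$ of the two copies, and the paper stratifies over pairs $(Z,W)$ with $|Z|=|W|=i$; but only $\dens W$ enters the estimate, so the analysis is simpler than a full glued-complex computation. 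Finally, the reducedness worry is dispatched early (Lemma~\ref{density of Y}): since the $k$ relators in each tuple of $\mathcal{Y}_\ell$ are pairwise distinct, every filling counted is automatically reduced, and the passage from all $k$-tuples to pairwise-distinct $k$-tuples costs nothing in density once $\dens_c Y > 1/2$.
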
\quad

In Section \ref{section existence of 2-complexes}, we prove Theorem \ref{existence of 2-complexes} using the multidimensional intersection formula for random subsets (Theorem \ref{multidim intersection}, \cite[Theorem 3.7]{Tsa21}), which generalizes the proof for the $C'(\lambda)$ phase transition in \cite[Theorem 1.4]{Tsa21}. We will see in Remark \ref{theorem implies corollary} that the second assertion of the theorem is equivalent to the following corollary.

\begin{cor}\label{cor of theorem} Let $(G_\ell(m,d))$ be a sequence of random groups with $m\geq2$ generators at density $d$. Let $s>0$ and $K>0$. Let $(Y_\ell)$ be a sequence of 2-complexes of the same geometric form such that $Y_\ell$ is fillable by $B_\ell$. If every sub-2-complex $Z_\ell$ of $Y_\ell$ satisfies
    \[|Z^{(1)}_\ell|\geq (1-d+s)|Z_\ell|\ell,\]
then a.a.s. $Y_\ell$ is fillable by $G_\ell(m,d)$.
\end{cor}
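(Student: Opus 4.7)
The plan is to derive this corollary directly from part (ii) of Theorem \ref{existence of 2-complexes}, with essentially no new probabilistic content. The bridge is the Definition \ref{def density of Y} of the critical density $\dens_c Y$ of the geometric form $(Y,\lambda)$, which encodes precisely the ratio $|Z^{(1)}|/(|Z|\ell)$ as an infimum taken over the relevant sub-2-complexes $Z$ of $Y$. Once this identification is made, the hypothesis of the corollary and the hypothesis of Theorem \ref{existence of 2-complexes}(ii) are essentially the same statement.

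Concretely, I would first translate the uniform inequality
\[|Z_\ell^{(1)}| \geq (1-d+s)|Z_\ell|\ell\]
into a statement about the geometric form. Because the inequality holds over all sub-2-complexes $Z_\ell$ of $Y_\ell$ with a common gap $s > 0$ that is independent of $\ell$, and because every sub-2-complex respects the geometric form, this translates into the bound $\dens_c Y \geq 1-d+s$. The positivity of $s$ is what upgrades the inequality to the strict form required by Theorem \ref{existence of 2-complexes}(ii), namely $d > 1 - \dens_c Y$.

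Having the strict inequality $d > 1 - \dens_c Y$ in hand, together with the assumption that $Y_\ell$ is fillable by $B_\ell$, I would apply Theorem \ref{existence of 2-complexes}(ii) directly to conclude that a.a.s. $Y_\ell$ is fillable by $G_\ell(m,d)$. The only step requiring care is the translation between the combinatorial ratio appearing in the hypothesis and the critical density of the geometric form; this is exactly the bookkeeping promised in Remark \ref{theorem implies corollary} and does not involve probability. No genuine obstacle arises in the corollary itself—the probabilistic work has already been performed in establishing Theorem \ref{existence of 2-complexes}.
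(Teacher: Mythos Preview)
Your proposal is correct and follows essentially the same route as the paper: the paper explicitly states in Remark \ref{theorem implies corollary} that the corollary is equivalent to Theorem \ref{existence of 2-complexes}(ii) via the identification $\dens(Z) = \lim_{\ell\to\infty} |Z_\ell^{(1)}|/(|Z_\ell|\ell)$, so that the uniform inequality with gap $s>0$ gives $\dens_c Y \geq 1-d+s > 1-d$. Your reduction to Theorem \ref{existence of 2-complexes}(ii) is exactly what the paper does, with no additional probabilistic input needed.
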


Note that we need $Y_\ell$ to have at least one filling by the set of all possible relators $B_\ell$. It is automatically satisfied for planar and simply connected 2-complexes. In addition, if every face boundary length of $Y_\ell$ is exactly $\ell$, then the given inequality is equivalent to an isoperimetric inequality similar the inequality of Theorem \ref{inequality for diagrams}. Hence the following corollary.

\begin{cor}\label{existence of diagrams} Let $(G_\ell(m,d))$ be a sequence of random groups with $m\geq2$ generators at density $d$. Let $s>0$ and $K>0$. Let $(D_\ell)$ be a sequence of finite planar 2-complexes of the same geometric form such that every face boundary length of $D_\ell$ is exactly $\ell$. If every sub-2-complex $D_\ell'$ of $D_\ell$ satisfies
    \[|\partial D'_\ell|\geq (1-2d+s)|D'_\ell|\ell,\]
then a.a.s. $D_\ell$ is fillable by $G_\ell(m,d)$.
\end{cor}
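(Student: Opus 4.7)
The plan is to reduce Corollary \ref{existence of diagrams} to Corollary \ref{cor of theorem} via a combinatorial identity that exchanges the boundary length $|\partial D'_\ell|$ for the edge count $|D'^{(1)}_\ell|$, using crucially the hypothesis that every face of $D_\ell$ has boundary length exactly $\ell$.

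First I would establish the identity. Fix a sub-2-complex $D'_\ell$ of $D_\ell$ and count the total face-boundary incidences in two ways. Since each face has boundary length $\ell$, the total equals $|D'_\ell|\ell$. On the other hand, planarity implies that each edge is adjacent to at most two faces, so an interior edge contributes $2$ to this total while a boundary edge contributes $1$. Writing $I$ and $B$ for the numbers of interior and boundary edges, one gets $|D'_\ell|\ell = 2I + B = 2|D'^{(1)}_\ell|-|\partial D'_\ell|$, whence
\[|D'^{(1)}_\ell| = \frac{|D'_\ell|\ell + |\partial D'_\ell|}{2}.\]
Substituting the assumed isoperimetric bound $|\partial D'_\ell|\geq (1-2d+s)|D'_\ell|\ell$ yields $|D'^{(1)}_\ell|\geq (1-d+s/2)|D'_\ell|\ell$ for every sub-2-complex $D'_\ell$, which is precisely the hypothesis of Corollary \ref{cor of theorem} with parameter $s/2$ in place of $s$.

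It remains to verify the auxiliary hypothesis that $D_\ell$ is fillable by $B_\ell$. As noted in the discussion preceding Corollary \ref{cor of theorem}, planarity allows one to label the oriented edges of $D_\ell$ by elements of $X_m^\pm$ so that every face boundary is a cyclically reduced word of length exactly $\ell$; concretely, one may label edges inductively along a spanning tree and then extend consistently, making local adjustments to avoid reductions in face boundaries. With fillability by $B_\ell$ established, Corollary \ref{cor of theorem} (applied with $s/2$) directly gives the a.a.s. fillability of $D_\ell$ by $G_\ell(m,d)$. The only non-routine point I anticipate is getting the edge-counting convention right in the identity above, in particular accounting correctly for edges that appear with multiplicity on the boundary cycle; but this is a minor bookkeeping issue that can be absorbed into the definitions of $|\partial D'_\ell|$ and $|D'^{(1)}_\ell|$ already in use throughout the paper.
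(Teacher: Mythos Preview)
Your proposal is correct and follows essentially the same route as the paper. The paper's ``proof'' of Corollary~\ref{existence of diagrams} is just the sentence preceding it: fillability by $B_\ell$ is automatic for planar (and simply connected) complexes, and when every face has boundary length exactly $\ell$ the inequality $|Z_\ell^{(1)}|\geq (1-d+s')|Z_\ell|\ell$ is equivalent to the isoperimetric inequality $|\partial Z_\ell|\geq (1-2d+2s')|Z_\ell|\ell$, so one reduces to Corollary~\ref{cor of theorem}. Your double-counting identity $2|D_\ell'^{(1)}|-|\partial D_\ell'|=|D_\ell'|\ell$ is exactly the content of that equivalence (the paper records the inequality version $2|D^{(1)}|-|\partial D|\leq |D|\ell$ in a footnote after Theorem~\ref{inequality for diagrams}, which becomes an equality when all face lengths equal $\ell$), and your remark about edges with boundary multiplicity is the only bookkeeping subtlety, already absorbed by the paper's definition of $|\partial D|$.
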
\quad

It is mentioned in \cite[Proposition 1.8]{OW11} that when $d<1/(p+1)$, a.a.s. a random group at density $d$ has the $C(p)$ small cancellation condition. As an application of Theorem \ref{existence of 2-complexes}, we show that there is a phase transition: if $d>1/(p+1)$, then a.a.s. a random group at density $d$ \textit{does not} have $C(p)$ (see Proposition \ref{c of p}).

\paragraph{Acknowledgements.} The content of this article is completed during my PhD thesis \cite[Chapter 4]{Tsa22} at the University of Strasbourg. I would like to thank my thesis advisor, Thomas Delzant, for his guidance and interesting discussions on the subject.

\section{Isoperimetric inequality for van Kampen 2-complexes}\label{section vk 2-complexes}

We shall prove Theorem \ref{inequality for 2-complex} in this section.

\paragraph{Van Kampen 2-complexes.}
We consider oriented combinatorial 2-complexes and van Kampen diagrams as in \cite{LS77}.

A 2-complex is a triplet $Y = (V,E,F)$ where $V$ is the set of vertices, $E$ is the set of oriented edges and $F$ is the set of oriented faces. Every edge $e\in E$ has a starting point $\alpha(e)\in V$, an ending point $\omega(e)\in V$ and an inverse edge $e\moinsun\in E$, satisfying $\alpha(e\moinsun)=\omega(e)$, $\omega(e\moinsun) = \alpha(e)$ and $(e\moinsun)\moinsun = e$. A \textit{geometric edge} is a pair of inverse edges $\overline e$, denoted by $\overline e$. Every face $f\in F$ has a boundary $\partial f$, which is a cyclically reduced loop of the under lying combinatorial oriented graph $Y^{(1)} = (V,E)$, and an inverse face $f\moinsun\in F$ satisfying $\partial(f\moinsun) = (\partial f)\moinsun$ and $(f\moinsun)\moinsun = f$. A \textit{geometric face} is a pair of inverse faces $\{f,f\moinsun\}$, denoted by $\overline f$. We denote $|Y^{(1)}|$ the number of geometric edges and $|Y|$ the number of geometric faces.

A \textit{van Kampen 2-complex} with respect to a group presentation $G = \langle X | R\rangle$ is a 2-complex $Y = (V,E,F)$ with two compatible labeling functions: labels on edges by generators $\varphi_1 : E\to X^\pm$, and labels on faces by relators $\varphi_2 : F\to R^\pm$. Compatible means that $(V,E,\varphi_1)$ is a labeled graph, $\varphi_2(f\moinsun) = \varphi_2(f)\moinsun$ and $\varphi_1(\partial f) = \varphi_2(f)$. The data of the labels $\varphi_1,\varphi_2$ on $Y$ is equivalently given by a combinatorial map $Y\to K(X,R)$ where $K(X,R)$ is the standard 2-complex with respect to the group presentation  $G = \langle X | R\rangle$ (with one vertex, an edge for each generator and its inverse, and a face for each relator and its inverse). We denote briefly $Y = (V,E,F,\varphi_1,\varphi_2)$.

A \textit{van Kampen diagram} $D$ is a finite, planar (embedded in the plan) and simply connected van Kampen 2-complex. Its boundary length $|\partial D|$ is the length of a boundary path, passing once by every edge adjacent to one face and twice by every edge adjacent to zero face.

A pair of faces in a van Kampen 2-complex is called \textit{reducible} if they have the same relator label and there is a common edge on their boundaries at the same position. A van Kampen 2-complex is called \textit{reduced} if there is no reducible pair of faces.

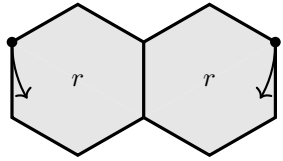
\begin{figure}[h]
    \centering
    \begin{tikzpicture}
        \fill[gray!20] (0,0) -- (0,1) --+ (30:1) -- (30:2);
        \fill[gray!20] (0,0) -- (-30:1) --+ (30:1) -- (30:2);
        \fill[gray!20] (0,0) -- (0,1) --+ (150:1) -- (150:2);
        \fill[gray!20] (0,0) -- (210:1) --+ (150:1) -- (150:2);
        \draw[very thick] (0,0) -- (0,1) --+ (30:1) -- (30:2);
        \draw[very thick] (0,0) -- (-30:1) --+ (30:1) -- (30:2);
        \draw[very thick] (0,1) --+ (150:1) -- (150:2);
        \draw[very thick] (0,0) -- (210:1) --+ (150:1) -- (150:2);
        \draw [thick, ->] (30:2) arc (0:-30:1.5);
        \draw [thick, ->] (150:2) arc (180:210:1.5);
        \fill (30:2) circle (0.07);
        \fill (150:2) circle (0.07);
        \node at (30:1) {$r$};
        \node at (150:1) {$r$};
    \end{tikzpicture}
    \caption{A reducible pair of faces.}
    \label{reducible faces}
\end{figure}

\subsection{Reduction degree and Complexity}
Let us define the \textit{reduction degree} of a van Kampen 2-complex and the \textit{complexity} of a 2-complex.

The \textit{reduction degree} of a non-reduced van Kampen diagram $Y = (V,E,F,\varphi_1,\varphi_2)$ with respect to a group presentation $\langle X|R\rangle$ is the total number of geometric edges causing reducible pair of faces, counted with \textit{multiplicity}: for any edge $e\in E$, any relator $r\in R$ and any integer $j$, we count the number of faces $f\in F$ labeled by $r$ and having $e$ as the $j$-th boundary edge. If this number is $k$, we add $(k-1)^+$ to the reduction degree where $(\cdot)^+$ is the positive part function.

\begin{defi}[Reduction degree, {\cite[Definition 2.5]{GM18}}]\label{reduction degree} Let $Y = (V,E,F,\varphi_1,\varphi_2)$  be a van Kampen 2-complex of a group presentation $G = \langle X|R\rangle$. Let $\ell$ be the maximal boundary length of faces of $Y$. The reduction degree of $Y$ is
\[\Red(Y) = \sum_{e\in E}\sum_{r\in R}\sum_{1\leq j\leq \ell}\Big(\big|\{f\in F \,|\, \varphi_2(f) = r, e \textup{ is the $j$-th edge of } \partial f\}\big| - 1 \Big)^+.\]
\end{defi}

It is not hard to see that a van Kampen 2-complex $Y$ is reduced if and only if $\Red(Y) = 0$.\\

A \textit{maximal arc} of a 2-complex is a reduced combinatorial path passing only by vertices of degree $2$ whose endpoints are not of degree $2$. The \textit{complexity} of a 2-complex encodes the number of maximal arcs with the number of faces.

\begin{defi}[Complexity of a 2-complex]\label{complexity}
Let $Y$ be a 2-complex. Let $K>0$. We say that $Y$ is of complexity $K$ if the following three conditions hold:
    \begin{itemize}
        \item $|Y|\leq K$.
        \item The number of maximal arcs of $Y$ is bounded by $K$.
        \item For any face $f$ of $Y$, the boundary path $\partial f$ is divided into at most $K$ maximal arcs.
    \end{itemize}
\end{defi}

Note that if $D$ is a planar and simply connected 2-complex with $|D|\leq K$, then the complexity of $D$ is $6K$. In fact, as the rank of its underlying graph is $K$, the number of its maximal arcs is at most $3K$, and every boundary path is divided into at most $6K$ maximal arcs (an arc may be used twice).

\begin{lem}\label{number of abstract 2-complexes}
Let $K>0$. For $\ell$ large enough, the number of 2-complexes of complexity $K$ with face boundary lengths at most $\ell$ is bounded by $\ell^{3K}$.
\end{lem}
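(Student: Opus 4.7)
The plan is to encode each 2-complex $Y$ of complexity $K$ by two pieces of data: a \emph{combinatorial type}, ranging over a finite set depending only on $K$, and a \emph{length assignment} on the maximal arcs of $Y$, with at most $\ell^K$ choices. The bound $\ell^{3K}$ will then follow by multiplication, provided $\ell$ is large enough in terms of $K$.

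First I would record the combinatorial type of $Y$ as the following finite data: the set of singular vertices (those of degree $\neq 2$), labelled $1,2,\dots$; the set of oriented maximal arcs, each specified by its ordered pair of endpoints among the singular vertices; and, for each of the at most $K$ faces of $Y$, the cyclic word in oriented maximal arcs describing its boundary. Since $Y$ has at most $K$ maximal arcs it has at most $2K$ singular vertices (excluding possible isolated vertices, which play no role), so the number of combinatorial multigraphs one can build is bounded by some constant $C_1(K)$. Each face boundary is a cyclic word of length at most $K$ in an alphabet of at most $4K$ oriented arcs, giving at most $(4K)^K$ possible boundaries per face and $(4K)^{K^2}$ possibilities for all faces combined. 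The total number of combinatorial types is therefore bounded by a constant $C(K)$ independent of $\ell$.

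Second, once the combinatorial type is fixed, reconstructing $Y$ amounts to assigning a positive integer length to each of its at most $K$ maximal arcs. Since every face boundary has length at most $\ell$, each arc length lies in $\{1,\dots,\ell\}$, giving at most $\ell^K$ length assignments. Multiplying, the total count of 2-complexes of complexity $K$ with face boundary lengths at most $\ell$ is at most $C(K)\,\ell^K$, which is below $\ell^{3K}$ as soon as $\ell^{2K}\geq C(K)$.

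The only real subtlety is to make the encoding unambiguous: the combinatorial type must carry enough information to reconstruct $Y$ up to isomorphism once the arc lengths are prescribed, and the crude bounds on the number of singular vertices, arcs, and face-boundary words must each be justified from the three clauses of Definition \ref{complexity}. Once this bookkeeping is in place, the remaining counting is immediate and the factor $\ell^K$ from arc lengths is the only $\ell$-dependent contribution, leaving ample room below the target $\ell^{3K}$.
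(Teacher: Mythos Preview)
Your approach is essentially the paper's: separate the data into a combinatorial type (bounded by a constant depending only on $K$) and an assignment of lengths to the at most $K$ maximal arcs (at most $\ell^K$ choices), then absorb the constant into the slack between $\ell^K$ and $\ell^{3K}$.

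One small oversight: in the paper's convention each face $f$ carries a boundary $\partial f$ that is a \emph{based} loop (so that ``the $j$-th edge of $\partial f$'' is well-defined, as used in the definition of reduction degree). Your encoding records only the \emph{cyclic} word of oriented arcs along $\partial f$, which determines the face boundary only up to choice of starting edge. The paper handles this explicitly with an extra factor of $(2\ell)^K$ for choosing a starting point and orientation of each face. If you add this datum to your encoding you get at most $C(K)\,\ell^{2K}$, still below $\ell^{3K}$ for $\ell$ large, so the argument goes through unchanged.
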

\begin{proof}
As there are at most $K$ maximal arcs, the number of vertices of valency greater than $3$ is at most $2K/3\leq K$. There are at most $K^2$ ways to draw an arc connecting two of these vertices, every arc is with length at most $\ell$. The number of underlying graphs is then at most $(K^2)^K\ell^K$.

To attach $K$ faces on a graph, we choose $K$ loops passing by at most $K$ arcs, there are at most $(K^2)^{K^2}$ choices. There are at most $(2\ell)^{K}$ ways to choose a starting point and an orientation for every face. The number of such 2-complexes is hence bounded by 
\[(K^2)^K\ell^K \times (K^2)^{K^2} \times (2\ell)^{K},\]
which is smaller than $\ell^{3K}$ if $\ell$ is large enough.
\end{proof}

\begin{rem}\label{why bounded complexity}
While the number of 2-complexes with a \textit{bounded complexity} grows polynomially with the maximal face boundary length $\ell$, it is not the case for 2-complexes with a \textit{bounded number of faces}. Hence the related argument in \hyperref[proof main theorem]{Proof of Theorem \ref{inequality for 2-complex}} does not work. Actually, there exists van Kampen 2-complexes that contradicts the inequality of Theorem \ref{inequality for 2-complex}.

For example, in \cite{CW15}, D. Calegari and A. Walker proved that at any density $d<1/2$, there exists a number $K$ depending only on $d$ such that a.a.s. there is a reduced van Kampen 2-complex $Y$ homeomorphic to a surface of genus $O(\ell)$ in $G_\ell(m,d)$ with at most $K$ faces. Since every edge is adjacent to two faces in a surface, we have $|Y^{(1)}|\leq \frac{1}{2}|Y|\ell$, while we expect that $|Y^{(1)}|\geq \left(1-d-\frac{s}{2}\right) >\frac{1}{2}|Y|\ell$.
\end{rem}

\subsection{Abstract van Kampen 2-complexes}
Let $(G_\ell(m,d))$ be a sequence of random groups at density $d$, defined by $G_\ell(m,d) = \langle x_1,\dots,x_m | R_\ell\rangle$. Recall that $B_\ell$ is the set of all cyclically reduced words of length at most $\ell$ and $|B_
ell| = (2m-1)^{\ell+O(1)}$. Let $0<\varepsilon<1-d$. Since $\log_{|B_\ell|}|R_\ell|$ converges in probability to the constant $d$, the probability event
\[Q_\ell:=\left\{(2m-1)^{(d-\frac{\varepsilon}{4}\ell)} \leq |R_\ell| \leq (2m-1)^{(d+\frac{\varepsilon}{4}\ell)}\right\}\]
is a.a.s. true (cf. \cite{Tsa21} Proposition 1.8).\\

If we consider the Bernoulli density model that the events $\{r\in R_\ell\}$ through $r\in B_\ell$ are independent of the same probability $(2m-1)^{(d-1)\ell}$, it is obvious that we have $\Pr(r_1,\dots,r_k\in R_\ell) = (2m-1)^{k(d-1)\ell}$ for distinct $r_1, \dots, r_k$ in $B_\ell$. In the permutation invariant density model, we have the following corresponding proposition, which is a variant of \cite{Tsa21} Lemma 3.10. 

\begin{prop} \label{k relators in R} Let $r_1, \dots, r_k$ be pairwise different relators in $B_\ell$. We have \[\Pr\Cond{r_1,\dots,r_k\in R_\ell}{Q_\ell}\leq (2m-1)^{k(d-1+\frac{\varepsilon}{2})\ell}.\]\\[-3em]
\qed
\end{prop}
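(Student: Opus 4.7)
The plan is to exploit permutation invariance to reduce the computation to a uniform-sampling question, then apply the size bound enforced by $Q_\ell$. First I condition on the exact size $|R_\ell|=n$: since $R_\ell$ is permutation invariant, conditionally on $|R_\ell|=n$ it is uniformly distributed over the $\binom{|B_\ell|}{n}$ subsets of $B_\ell$ of cardinality $n$. For distinct relators $r_1,\dots,r_k$, a direct counting then gives
\[
\PrCond{r_1,\dots,r_k\in R_\ell}{|R_\ell|=n} \;=\; \frac{\binom{|B_\ell|-k}{n-k}}{\binom{|B_\ell|}{n}} \;=\; \prod_{i=0}^{k-1}\frac{n-i}{|B_\ell|-i} \;\leq\; \left(\frac{n}{|B_\ell|}\right)^k,
\]
where each factor $\tfrac{n-i}{|B_\ell|-i}$ is bounded by $\tfrac{n}{|B_\ell|}$ because $n\leq |B_\ell|$.

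Next I pass from conditioning on the exact size to conditioning on $Q_\ell$. Since $Q_\ell$ is itself a function of $|R_\ell|$, the tower property together with the upper bound $|R_\ell|\leq (2m-1)^{(d+\varepsilon/4)\ell}$ built into $Q_\ell$ yield
\[
\PrCond{r_1,\dots,r_k\in R_\ell}{Q_\ell} \;\leq\; \esp{\left(\frac{|R_\ell|}{|B_\ell|}\right)^k \;\middle|\; Q_\ell} \;\leq\; \left(\frac{(2m-1)^{(d+\varepsilon/4)\ell}}{|B_\ell|}\right)^k.
\]
Since $B_\ell$ is the set of cyclically reduced words over $m\geq 2$ generators of length at most $\ell$, one has $|B_\ell|=(2m-1)^{\ell+O(1)}$, so the quantity inside the parentheses is of order $(2m-1)^{(d-1+\varepsilon/4)\ell+O(1)}$. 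For $\ell$ large enough (depending only on $\varepsilon$ and $m$), the additive $O(1)$ is absorbed into $\varepsilon\ell/4$, producing the bound $(2m-1)^{k(d-1+\varepsilon/2)\ell}$.

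The only delicate point is the interpretation of \emph{permutation invariant}: the whole argument rests on the assumption of \cite[Definition 2.5]{Tsa21} that the law of $R_\ell$ is invariant under the full symmetric group of $B_\ell$, which is exactly what forces uniformity conditionally on the size and makes the counting step go through. Since the author explicitly notes that this proposition is a variant of \cite[Lemma 3.10]{Tsa21}, I do not anticipate any further obstacle beyond checking that reference.
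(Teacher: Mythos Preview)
Your argument is correct: permutation invariance forces the conditional law of $R_\ell$ given its cardinality to be uniform on subsets of that size, the hypergeometric count gives $\prod_{i=0}^{k-1}\frac{n-i}{|B_\ell|-i}\leq (n/|B_\ell|)^k$, and the upper bound on $|R_\ell|$ from $Q_\ell$ together with $|B_\ell|=(2m-1)^{\ell+O(1)}$ finishes it for $\ell$ large (the ``large enough'' is harmless since the proposition is only used asymptotically). The paper itself gives no proof here---it simply cites \cite[Lemma~3.10]{Tsa21} as the source of this variant---so your write-up is in fact supplying the details the paper omits, and it matches the standard argument one expects behind that citation.
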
\quad

Abstract van Kampen 2-complexes, as abstract van Kampen diagrams introduced by Ollivier in \cite{Oll04}, is a structure between 2-complexes and van Kampen 2-complexes that helps us solve 2-complex problems in random groups. Recall that since isolated edges do not affect fillability, we will only consider finite 2-complexes without isolated edges.

\begin{defi}[Abstract van Kampen 2-complex] An abstract van Kampen 2-complex $\tilde Y$ is a 2-complex $(V,E,F)$ with a labeling function on faces by integer numbers and their inverses $\tilde\varphi_2: F\to \{1,1^-,2,2^-,\dots,k,k^-\}$ such that $\tilde\varphi_2(f\moinsun) = \tilde\varphi_2(f)^-$. 
We denote simply $\tilde Y = (V,E,F,\tilde\varphi_2)$.
\end{defi}

By convention $(i^-)^- = i$. The integers $\{1,\dots, k\}$ are called abstract relators. Similar to a van Kampen diagram, a pair of faces $f,f'\in F$ is \textit{reducible} if they are labeled by the same abstract relator, and they share an edge at the same position of their boundaries. An abstract diagram is called \textit{reduced} if there is no reducible pair of faces. Let $\ell$ be the maximal boundary length of faces. The \textit{reduction degree} of $\tilde Y$ 2-complex can be similarly defined as
\[\Red(\tilde Y) = \sum_{e\in E}\sum_{1\leq i\leq k}\sum_{1\leq j\leq \ell}\Big(\big|\{f\in F \,|\, \tilde\varphi_2(f) = i, e \textup{ is the $j$-th edge of } \partial f\}\big| - 1 \Big)^+.\]

We say that an abstract van Kampen 2-complex with $k$ abstract relators $\tilde Y = (V,E,F,\tilde\varphi_2)$ is \textit{fillable} by a group presentation $G = \langle X|R\rangle$ (or by a set of relators $R$) if there exists $k$ \textit{different} relators $r_1,\dots,r_k\in R$ such that the construction $\varphi_2(f) := r_{\tilde\varphi_2(f)}$ gives a van Kampen 2-complex $Y = (V,E,F,\varphi_1,\varphi_2)$\footnote{Note that the edge labeling $\varphi_1$ is determined by the face labeling $\varphi_2$ as there is no isolated edges.} of $G$. The $k$-tuple of relators $(r_1,\dots,r_k)$, or the van Kampen 2-complex $Y$, is called a \textit{filling} of $\tilde Y$. As we picked different relators for different abstract relators, if $Y$ is a filling of $\tilde Y$, then $\Red(Y) = \Red(\tilde Y)$, and $\tilde Y$ is reduced if and only if $Y$ is reduced.

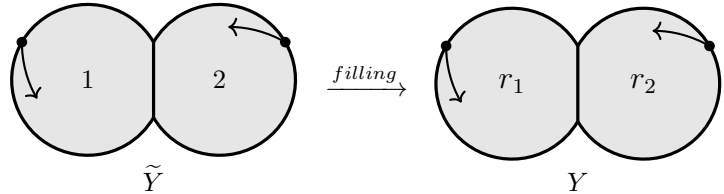
\begin{figure}[h]
\centering
    \begin{tikzpicture}
            \fill[gray!20] (30:1) circle (1) ;
            \fill[gray!20] (150:1) circle (1) ;
            \draw[very thick] (0,0) -- (0,1);
            \draw[very thick] (0,0) arc (-150:150:1);
            \draw[very thick] (0,1) arc (30:330:1);
            \node at (30:1) {$2$};
            \node at (150:1) {$1$};
            \draw [thick, ->] (30:2) arc (60:90:1.5);
            \draw [thick, ->] (150:2) arc (180:210:1.5);
            \fill (30:2) circle (0.07);
            \fill (150:2) circle (0.07);
            \node at (2.8,0.5) {$\xrightarrow{filling}$};
            \node at (0,-0.8) {$\tilde Y$};
    \end{tikzpicture}
    \begin{tikzpicture}
            \fill[gray!20] (30:1) circle (1) ;
            \fill[gray!20] (150:1) circle (1) ;
            \draw[very thick] (0,0) -- (0,1);
            \draw[very thick] (0,0) arc (-150:150:1);
            \draw[very thick] (0,1) arc (30:330:1);
            \node at (30:1) {\large{$r_2$}};
            \node at (150:1) {\large{$r_1$}};
            \draw [thick, ->] (30:2) arc (60:90:1.5);
            \draw [thick, ->] (150:2) arc (180:210:1.5);
            \fill (30:2) circle (0.07);
            \fill (150:2) circle (0.07);
            \node at (0,-0.8) {$Y$};
    \end{tikzpicture}
    \caption{Filling an abstract van Kampen 2-complex.}
    \label{filling an abstract diagram}
\end{figure}

Denote $\ell_i$ the length of the abstract relator $i$ for $1\leq i \leq k$. Let $\ell = \max\{\ell_1, \dots, \ell_k\}$ be the maximal boundary length of faces. The pairs of integers $(i,1), \dots, (i,\ell_i)$ are called \textit{abstract letters} of $i$. The set of abstract letters of $\tilde Y$ is then a subset of the product set $\{1,\dots,k\} \times \{1,\dots,\ell\}$. The geometric edges of $\tilde Y$ are decorated by abstract letters and directions: Let $f\in F$ labeled by $i$ and let $e\in E$ at the $j$-th position of $\partial f$. The geometric edge $\overline e$ is decorated, on the side of $\overline f$, by an arrow indicating the direction of $e$ and the abstract letter $(i,j)$. The number of decorations on a geometric edge is the number of its adjacent faces with multiplicity (an edge may be attached twice by the same face).

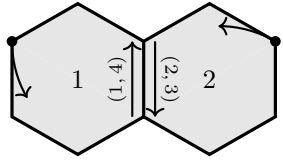
\begin{figure}[h]
    \centering
\begin{tikzpicture}
        \fill[gray!20] (0,0) -- (0,1) --+ (30:1) -- (30:2);
        \fill[gray!20] (0,0) -- (-30:1) --+ (30:1) -- (30:2);
        \fill[gray!20] (0,0) -- (0,1) --+ (150:1) -- (150:2);
        \fill[gray!20] (0,0) -- (210:1) --+ (150:1) -- (150:2);
        \draw[very thick] (0,0) -- (0,1) --+ (30:1) -- (30:2);
        \draw[very thick] (0,0) -- (-30:1) --+ (30:1) -- (30:2);
        \draw[very thick] (0,1) --+ (150:1) -- (150:2);
        \draw[very thick] (0,0) -- (210:1) --+ (150:1) -- (150:2);
        \draw [thick, ->] (30:2) arc (60:90:1.5);
        \draw [thick, ->] (150:2) arc (180:210:1.5);
        \fill (30:2) circle (0.07);
        \fill (150:2) circle (0.07);
        \draw [thick, ->] (0.15,1) -- (0.15,0);
        \draw [thick, ->] (-0.15,0) -- (-0.15,1);
        \node at (30:1) {$2$};
        \node at (150:1) {$1$};
        \node[rotate = -90] at (0.35,0.5) {\scriptsize{$(2,3)$}};
        \node[rotate = 90] at (-0.35,0.5) {\scriptsize{$(1,4)$}};
\end{tikzpicture}
    \caption{A geometric edge decorated by two abstract letters.}
    \label{diagram decoration}
\end{figure}

\begin{defi}[free-to-fill]\label{free to fill} An abstract letter $(i,j)$ of $\tilde D$ is \textbf{free-to-fill} if, for any edge $\overline e$ decorated by $(i,j)$, it is the minimal decoration on $\overline e$.
\end{defi}

Denote $\alpha_i$ the number of faces labeled by the abstract relator $i$ and $\eta_i$ the number of free-to-fill edges of $i$. We have the following estimation.

\begin{lem}\label{alphai etai 2} Let $\tilde Y = (V,E,F,\tilde \varphi_2)$ be an abstract van Kampen 2-complex with $k$ abstract relators.  Then
\[\sum_{i=1}^k\alpha_i\eta_i\leq |\tilde Y^{(1)}|+\Red(\tilde Y).\]
\end{lem}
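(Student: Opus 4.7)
The plan is to rewrite $\sum_i \alpha_i \eta_i$ as a sum of decorations distributed over the geometric edges of $\tilde Y$, and then dispatch each decoration to either a unit of $|\tilde Y^{(1)}|$ or a unit of $\Red(\tilde Y)$.

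For each free-to-fill abstract letter $(i, j)$, every one of the $\alpha_i$ faces $f$ labeled $i$ places a decoration of label $(i, j)$ on the geometric edge underlying the $j$-th boundary edge of $\partial f$. Letting $N(\overline e)$ be the number of free-to-fill decorations appearing on a geometric edge $\overline e$, this gives
\[
\sum_{i=1}^k \alpha_i \eta_i = \sum_{\overline e \in \tilde Y^{(1)}} N(\overline e).
\]

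The key observation is that whenever $N(\overline e) \geq 1$, all free-to-fill decorations on $\overline e$ share a single abstract letter $(i^\ast, j^\ast)$: each such decoration is, by the definition of free-to-fill, the minimal decoration on $\overline e$, and two distinct labels cannot simultaneously be minimal. Consequently $N(\overline e)$ equals the number of faces labeled $i^\ast$ whose $j^\ast$-th boundary edge lies in $\{e, e^{-1}\}$.

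Using the inequality $N(\overline e) \leq \mathbf{1}_{\{N(\overline e) \geq 1\}} + (N(\overline e) - 1)^+$ and summing over $\overline e$, the first term contributes at most $|\tilde Y^{(1)}|$. For the second, each unit of excess $(N(\overline e) - 1)^+$ corresponds to an additional face labeled $i^\ast$ sharing the $j^\ast$-th position at $e$ or at $e^{-1}$, and each such excess is exactly accounted for by the corresponding $(|\{\cdot\}| - 1)^+$ term in the formula defining $\Red(\tilde Y)$. Summing yields $\sum_{\overline e}(N(\overline e) - 1)^+ \leq \Red(\tilde Y)$, and combining the two estimates gives the claim.

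The step I expect to require the most care is this last matching: if the shared free-to-fill label $(i^\ast, j^\ast)$ appears on $\overline e$ both through a face carrying $e$ at position $j^\ast$ and through a face carrying $e^{-1}$ at position $j^\ast$, the raw excess can outpace the sum of the two corresponding $(|\cdot|-1)^+$ terms in $\Red(\tilde Y)$ (those indexed by the triples $(e, i^\ast, j^\ast)$ and $(e^{-1}, i^\ast, j^\ast)$) by one. Resolving this requires exploiting the directional arrow attached to each decoration: a two-sided coexistence of a single abstract letter on $\overline e$ is precisely what minimality in the free-to-fill definition is meant to exclude, so such configurations must be ruled out at the bookkeeping stage.
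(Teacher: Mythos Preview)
Your argument is correct and takes essentially the same route as the paper: both redistribute $\sum_i\alpha_i\eta_i$ over geometric edges via minimality of decorations---the paper introduces the notion of a \emph{preferred face} of $\overline e$ (an adjacent face supplying the minimal decoration), shows $\eta_i\le|\overline E_{\overline f}|$ for each face $\overline f$ labeled $i$, and then bounds $\sum_{\overline f}|\overline E_{\overline f}|\le\sum_{\overline e}(1+\Red(\overline e))$, which is the same double count as your $\sum_{\overline e}N(\overline e)$. Your closing caveat about orientations is well taken and in fact more careful than the paper's own write-up: the paper pools both orientations of $\overline e$ into a single $\Red(\overline e)$ and records the last step as an equality with $\Red(\tilde Y)$ without comment, whereas your remark that the arrow is part of the decoration (so a free-to-fill letter can occur on a given $\overline e$ with only one direction) is exactly what pins $(N(\overline e)-1)^+$ to a single oriented-edge summand of $\Red(\tilde Y)$ and makes the matching go through.
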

\begin{proof} Denote $\overline E$ the set of geometric edges and $\overline F$ the set of geometric faces. For any geometric edge $\overline e$, an adjacent face $\overline f$ from which the decoration is minimal is called a \textit{preferred face} of $\overline e$. For any face $\overline f$, let $\overline E_{\overline f}$ be the set of geometric edges $\overline e$ on its boundary such that $\overline f$ is a preferred face of $\overline e$. Note that an edge will never be counted twice as the decorations given by one face are all different. According to Definition \ref{free to fill}, for any face $f$ with $\tilde\varphi_2(f) = i$, we have $\eta_i\leq |\overline E_{\overline f}|$. Hence,
\[\sum_{i=1}^k\alpha_i\eta_i\leq \sum_{\overline f \in \overline F} |\overline E_{\overline f}|.\]

Denote $\Red(\overline e)$ the reduction degree caused by the edge $\overline e$. That is,
\[\Red(\overline e):= \sum_{1\leq i\leq k}\sum_{1\leq j\leq \ell}\Big(\big|\{f\in F \,|\, \tilde\varphi_2(f) = i,  \textup{ $e$ or $e\moinsun$ is the $j$-th edge of } \partial f\}\big| - 1 \Big)^+,\]
so that the number of preferred faces of $\overline e$ is bounded by $1+\Red(\overline e)$. Hence, \[\sum_{\overline f \in \overline F} |\overline E_{\overline f}| \leq \sum_{\overline e \in \overline E}\Big(1+\Red(\overline e)\Big) = |\tilde Y^{(1)}|+\Red(\tilde Y).\]
\end{proof}

\paragraph{Probability of filling.} We shall estimate the probability that an abstract van Kampen 2-complex $\tilde Y$ is fillable by a random group $G_\ell(m,d)$. This step is the key to prove Theorem \ref{inequality for 2-complex}. Recall that $Q_\ell:=\left\{(2m-1)^{(d-\frac{\varepsilon}{4}\ell)} \leq |R_\ell| \leq (2m-1)^{(d+\frac{\varepsilon}{4}\ell)}\right\}$ is an a.a.s. true probability event.

\begin{lem}\label{for sub 2-complexes} Let $\tilde Y$ be an abstract van Kampen 2-complex with $k$ abstract relators. We have
\[\Pr\Cond{\tilde Y \textup{ is fillable by } G_\ell(m,d)}{Q_\ell} \leq \left(\frac{2m}{2m-1}\right)^k (2m-1)^{\sum_{i=1}^k\left( \eta_i + \left(d-1+\frac{\varepsilon}{2}\right)\ell \right)}.\]
\end{lem}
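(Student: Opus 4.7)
The plan is to apply a union bound over all $k$-tuples of distinct relators $(r_1,\dots,r_k)\in B_\ell^k$ that constitute a filling of $\tilde Y$. Denoting by $N(\tilde Y)$ the number of such filling tuples, the union bound gives
\[\Pr\Cond{\tilde Y\text{ is fillable by }G_\ell(m,d)}{Q_\ell} \leq N(\tilde Y)\cdot\max_{(r_1,\dots,r_k)}\Pr\Cond{r_1,\dots,r_k\in R_\ell}{Q_\ell}.\]
By Proposition~\ref{k relators in R}, each conditional probability on the right is at most $(2m-1)^{k(d-1+\varepsilon/2)\ell}$, which accounts for the term $k(d-1+\varepsilon/2)\ell$ appearing in each summand of the target exponent. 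It then suffices to bound $N(\tilde Y)$ by $\left(\tfrac{2m}{2m-1}\right)^k(2m-1)^{\sum_i\eta_i}$.

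To count fillings, the plan is to process the abstract relators in the order $1,2,\dots,k$ and, inside each relator $r_i$, traverse its boundary positions cyclically. By Definition~\ref{free to fill}, an abstract letter $(i,j)$ that is not free-to-fill lies on a geometric edge whose minimum decoration is some $(i',j')<(i,j)$; hence the letter assigned at position $j$ of $r_i$ is forced by the already-chosen $r_{i'}$ (or by an earlier position of $r_i$ itself), via the fixed orientation of the shared edge. Consequently only the $\eta_i$ free-to-fill positions of $r_i$ remain to be chosen, and this bookkeeping avoids both over- and under-counting because the ordering of decorations makes each forced letter attributable to a unique earlier choice.

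For the cyclically-reduced constraint, each free-to-fill position has at most $2m-1$ admissible letters once the letter at the immediately preceding position along $\partial f$ is read off. If at least one position of $r_i$ is already fixed, one cuts the cyclic word at that position and counts at most $(2m-1)^{\eta_i}$ completions. If all $\ell_i$ positions are free (so $\eta_i=\ell_i$), the standard estimate $2m(2m-1)^{\ell_i-1}=\tfrac{2m}{2m-1}(2m-1)^{\eta_i}$ on the number of cyclically reduced words of length $\ell_i$ applies. In both cases the per-relator count is bounded by $\tfrac{2m}{2m-1}(2m-1)^{\eta_i}$, and multiplying over $i=1,\dots,k$ yields the claimed bound on $N(\tilde Y)$.

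The main subtlety, in my view, is the sequential filling argument: one must confirm that non-free-to-fill positions are genuinely forced by strictly earlier decorations, and that the resulting letter is unambiguous once the orientation of the corresponding edge is taken into account. Once this is in place, the remaining estimate is elementary cyclic-word counting, and the prefactor $\left(\tfrac{2m}{2m-1}\right)^k$ in the statement is exactly the price to pay for handling the cyclic closure constraint in the all-positions-free case.
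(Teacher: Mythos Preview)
Your proof is correct and follows essentially the same approach as the paper: a union bound over fillings, Proposition~\ref{k relators in R} for each filling, and the count of fillings obtained by processing the abstract letters in lexicographic order and noting that only the $\eta_i$ free-to-fill positions of each abstract relator carry any freedom. Your case distinction (all positions of $r_i$ free versus at least one forced) is slightly more careful than the paper's version, which simply allows $2m$ choices at position $j=1$ and $2m-1$ elsewhere to arrive at the same bound $2m(2m-1)^{\eta_i-1}=\tfrac{2m}{2m-1}(2m-1)^{\eta_i}$; both routes give the identical final estimate.
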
 
\begin{proof} Let us estimate the number of fillings of $\tilde Y$. For every free-to-fill abstract letter $(i,j)$, there are at most $2m$ ways to fill a generator if $j=1$, at most $(2m-1)$ ways to fill if $j\neq 1$ for avoiding reducible word. As there are $\eta_i$ free-to-fill abstract letters on the $i$-th abstract relator, there are at most $2m(2m-1)^{\eta_i-1}$ ways to fill it. So there are at most $\prod_{i=1}^k\left(2m (2m-1)^{\eta_i-1}\right)$ ways to fill $\tilde Y$.

Let $Y$ be a van Kampen 2-complex, which is a filling of $\tilde Y$. The 2-complex $Y$ is labeled by $k$ different relators in $B_\ell$, denoted $r_1,\dots, r_k$. By lemma \ref{k relators in R},
\begin{align*}
    \Pr\Cond{Y \textup{ is a 2-complex of } G_\ell(m,d)}{Q_\ell} & = \Pr\Cond{r_1,\dots,r_k\in R_\ell}{Q_\ell}\\
    & \leq (2m-1)^{k(d-1+\frac{\varepsilon}{2})\ell}.
\end{align*}

Hence
\begin{align*}
    \Pr\Cond{\tilde Y \textup{ is fillable by } G_\ell(m,d)}{Q_\ell} & \leq \sum_{Y\textup{ fills } \tilde Y}\Pr\Cond{Y \textup{ is a 2-complex of } G_\ell(m,d)}{Q_\ell} \\
    & \leq \prod_{i=1}^k\left(2m (2m-1)^{\eta_i-1}\right)(2m-1)^{k(d-1+\frac{\varepsilon}{2})\ell} \\
    & \leq \left(\frac{2m}{2m-1}\right)^k (2m-1)^{\sum_{i=1}^k\left( \eta_i + \left(d-1+\frac{\varepsilon}{2}\right)\ell \right)}.
\end{align*}
\end{proof}\quad

\begin{lem}\label{key lemma} Let $\tilde Y$ be an abstract van Kampen 2-complex with $k$ abstract relators. Suppose that $\tilde Y$ does not satisfy the inequality given in Theorem \ref{inequality for 2-complex}, i.e.
\[|\tilde Y^{(1)}| + \Red(\tilde Y) < (1-d-\varepsilon)|\tilde Y|\ell,\]
then

\[\Pr\Cond{\tilde Y \textup{ is fillable by } G_\ell(m,d)}{Q_\ell} \leq \left(\frac{2m}{2m-1}\right)(2m-1)^{-\frac{\varepsilon}{2}\ell}.\]
\end{lem}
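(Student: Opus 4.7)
The plan is to chain the probability estimate of Lemma \ref{for sub 2-complexes} with the edge-count inequality of Lemma \ref{alphai etai 2} and the hypothesis, and then conclude via the elementary observation that a quantity at most $1$ raised to the power $k \geq 1$ does not increase.

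Concretely, I would first apply Lemma \ref{for sub 2-complexes} to obtain
\[\Pr\Cond{\tilde Y \textup{ is fillable by } G_\ell(m,d)}{Q_\ell} \leq \left(\frac{2m}{2m-1}\right)^k (2m-1)^{\sum_{i=1}^k \eta_i + k(d-1+\varepsilon/2)\ell}.\]
Dropping any abstract relator that labels no face, I may assume $\alpha_i \geq 1$ for every $i$, so in particular $\sum_i \eta_i \leq \sum_i \alpha_i \eta_i$. Combining this with Lemma \ref{alphai etai 2} and the hypothesis $|\tilde Y^{(1)}| + \Red(\tilde Y) < (1-d-\varepsilon)|\tilde Y|\ell$ gives
\[\sum_i \eta_i \;\leq\; |\tilde Y^{(1)}|+\Red(\tilde Y) \;<\; (1-d-\varepsilon)\,|\tilde Y|\,\ell.\]
Plugging back and using $|\tilde Y|=k$ in the principal case where $\alpha_i=1$ for each $i$, the $(2m-1)$-exponent becomes strictly less than $k(1-d-\varepsilon)\ell + k(d-1+\varepsilon/2)\ell = -k\varepsilon\ell/2$, whence
\[\Pr\Cond{\tilde Y \textup{ fillable by } G_\ell(m,d)}{Q_\ell} \leq \left[\frac{2m}{2m-1}(2m-1)^{-\varepsilon\ell/2}\right]^k.\]
For $\ell$ large enough the bracketed quantity is at most $1$, and raising to the power $k\geq 1$ only decreases it, yielding the claimed bound.

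The main obstacle I anticipate is the case $|\tilde Y| > k$, where some face-label multiplicity $\alpha_i$ exceeds $1$: the hypothesis then only controls the $\alpha$-weighted sum $\sum_i \alpha_i \eta_i$, whereas the probability bound of Lemma \ref{for sub 2-complexes} is written in terms of the unweighted $\sum_i \eta_i$, and a naive substitution leaves a positive gap in the exponent proportional to $(|\tilde Y|-k)(1-d-\varepsilon)\ell$. To close this gap I would reduce to the multiplicity-one situation, either by passing to a sub-2-complex of $\tilde Y$ in which each abstract relator appears exactly once and whose fillability is implied by that of $\tilde Y$, or by exploiting the structural constraints $\eta_i \leq \ell$ and $\alpha_i \geq 1$ together with the weighted inequality to re-establish the exponent estimate $\leq -\varepsilon\ell/2$ up to the $(2m/(2m-1))$ prefactor.
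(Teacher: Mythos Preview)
Your argument is complete and correct when every $\alpha_i=1$, i.e.\ when $|\tilde Y|=k$; in that case it is even a bit simpler than the paper's. The difficulty, as you yourself note, is the general case $|\tilde Y|>k$, and neither of your proposed fixes closes the gap.

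For Fix~1, passing to a sub-2-complex $\tilde Z\leq\tilde Y$ with one face per abstract relator certainly inherits fillability, but there is no reason $\tilde Z$ should itself violate the inequality $|\tilde Z^{(1)}|+\Red(\tilde Z)<(1-d-\varepsilon)|\tilde Z|\ell$: the hypothesis gives a bound proportional to $|\tilde Y|\ell$, not to $k\ell$, so nothing transfers. For Fix~2, the constraints $\eta_i\le\ell$, $\alpha_i\ge1$ and $\sum_i\alpha_i\eta_i<(1-d-\varepsilon)|\tilde Y|\ell$ simply do \emph{not} force $\sum_i\eta_i+k(d-1+\tfrac{\varepsilon}{2})\ell\le-\tfrac{\varepsilon}{2}\ell$. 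Writing $|\tilde Y|=k+t$, the estimate obtained from $\sum_i\eta_i\le\sum_i\alpha_i\eta_i$ only yields an exponent below $(1-d-\varepsilon)(k+t)\ell+k(d-1+\tfrac{\varepsilon}{2})\ell$, and this exceeds $-\tfrac{\varepsilon}{2}\ell$ as soon as $t(1-d)>(t+\tfrac{k-1}{2})\varepsilon$, which already happens for $t=1$ whenever $\varepsilon<\tfrac{2(1-d)}{k+1}$. So for small $\varepsilon$ the single application of Lemma~\ref{for sub 2-complexes} to $\tilde Y$ is genuinely too weak.

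The missing idea is to use \emph{all} the sub-2-complexes in a filtration, not just one. The paper orders the abstract relators so that $\alpha_1\ge\cdots\ge\alpha_k$, sets $\tilde Y_i$ to be the union of faces labeled $1,\dots,i$, and applies Lemma~\ref{for sub 2-complexes} to each $\tilde Y_i$, obtaining $\log_{2m-1}P_k\le\log_{2m-1}P_i\le\sum_{j\le i}\big(\eta_j+(d-1+\tfrac{\varepsilon}{2})\ell+c\big)$ for every $i$ (with $c=\log_{2m-1}\tfrac{2m}{2m-1}$). Weighting the $i$-th inequality by $\alpha_i-\alpha_{i+1}\ge0$ and summing (Abel summation) turns the right-hand side into $\sum_i\alpha_i\eta_i+|\tilde Y|\big((d-1+\tfrac{\varepsilon}{2})\ell+c\big)$, which is exactly the combination controlled by Lemma~\ref{alphai etai 2} and the hypothesis. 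This weighting is what converts the unweighted $\sum_i\eta_i$ in the probability estimate into the weighted $\sum_i\alpha_i\eta_i$ appearing in the edge-count inequality, and it cannot be replaced by the elementary bounds you suggest.
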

\begin{proof}
Let $\tilde Y_i$ be the sub-2-complex of $\tilde Y$ consisting of faces labeled by the $i$ first abstract relators. Denote $P_i=\Pr\Cond{\tilde Y_i \textup{ is fillable by } G_\ell(m,d) }{Q_\ell}$. Apply lemma \ref{for sub 2-complexes} on $\tilde Y_i$, we have
\[P_i\leq \left(\frac{2m}{2m-1}\right)^i (2m-1)^{\sum_{j=1}^i\left( \eta_j + \left(d-1+\frac{\varepsilon}{2}\right)\ell \right)}.\]

Note that if $\tilde Y$ is fillable by $G_\ell(m,d)$ then its sub 2-complex $\tilde Y_i$ is fillable by the same group. So for any $1\leq i\leq k$,

\[\log_{2m-1}(P_k)\leq \log_{2m-1}(P_i) \leq \sum_{j=1}^i\left( \eta_j + \left(d-1+\frac{\varepsilon}{2}\right)\ell +\log_{2m-1}\left(\frac{2m}{2m-1}\right)\right).\]

Without loss of generality, suppose that $\alpha_1 \geq \alpha_2 \geq \dots \geq \alpha_k$. Note that $\log_{2m-1}(P_k)$ is negative and $\alpha_1\leq |\tilde Y|$, so $|\tilde Y|\log_{2m-1}(P_k)\leq \alpha_1\log_{2m-1}(P_k)$. By Abel's summation formula, with convention $\alpha_{k+1} = 0$,
\begin{align*}
    |\tilde Y|\log_{2m-1}(P_k)& \leq \alpha_1\log_{2m-1}(P_k) = \sum_{i=1}^k(\alpha_i-\alpha_{i+1})\log_{2m-1}(P_k) \\
    &\leq \sum_{i=1}^k(\alpha_i-\alpha_{i+1})\sum_{j=1}^i\left[\eta_i+\left(d-1+\frac{\varepsilon}{2}\right)\ell+\log_{2m-1}\left(\frac{2m}{2m-1}\right)\right]\\
    &= \sum_{i=1}^k\alpha_i\left[\eta_i+\left(d-1+\frac{\varepsilon}{2}\right)\ell+\log_{2m-1}\left(\frac{2m}{2m-1}\right)\right]\\
    &=\sum_{i=1}^k\alpha_i\eta_i + \left(\sum_{i=1}^k\alpha_i\right)\left[\left(d-1+\frac{\varepsilon}{2}\right)\ell+\log_{2m-1}\left(\frac{2m}{2m-1}\right)\right].
\end{align*}

Note that $\sum_{i=1}^k\alpha_i=|\tilde Y|$. By Lemma \ref{alphai etai 2} and the hypothesis of the current lemma, 
\[\sum_{i=1}^k\alpha_i\eta_i \leq |\tilde Y^{(1)}|+\Red(\tilde Y) < (1-d-\varepsilon)|\tilde Y|\ell.\]
Hence,

\begin{align*}
    |\tilde Y|\log_{2m-1}(P_k)& \leq (1-d-\varepsilon)|\tilde Y|\ell + |\tilde Y|\left[\left(d-1+\frac{\varepsilon}{2}\right)\ell+\log_{2m-1}\left(\frac{2m}{2m-1}\right)\right]\\
    &\leq |\tilde Y|\left[-\frac{\varepsilon}{2}\ell+\log_{2m-1}\left(\frac{2m}{2m-1}\right)\right].
\end{align*}
\end{proof}

\subsection{Proof of Theorem \ref{inequality for 2-complex}}\label{proof main theorem} Under the condition $Q_\ell := \{(2m-1)^{(d-\frac{\varepsilon}{4}\ell)} \leq |R_\ell| \leq (2m-1)^{(d+\frac{\varepsilon}{4}\ell)}\}$, the probability that there exists a van Kampen 2-complex of complexity $K$ of $G_\ell(m,d)$ satisfying the inverse inequality
\[|Y^{(1)}| + \Red(Y) < (1-d-\varepsilon)|Y|\ell\tag{$*$}\]
is bounded by
\[\sum_{\tilde Y \textup{ of complexity $K$, satisfying $(*)$}}\Pr\Cond{\tilde Y\textup{ is fillable by $G_\ell(m,d)$}}{Q_\ell}.\]

By Lemma \ref{number of abstract 2-complexes} and the face that there at most $K^{2K}$ ways to label a 2-complex with $K$ faces by abstract relators $\{1^\pm,\dots,K^\pm\}$, there are at most $\ell^{3K}\times K^{2K}$ terms in the sum. By Lemma \ref{key lemma}, every term is bounded by $\left(\frac{2m}{2m-1}\right)(2m-1)^{-\frac{\varepsilon}{2}\ell}$. So the sum is smaller than
\[\ell^{3K}K^{2K}\left(\frac{2m}{2m-1}\right)(2m-1)^{-\frac{\varepsilon}{2}\ell},\]
which converges to $0$ as $\ell\to\infty$.

By definition $\Pr(Q_\ell)\tendversl 1$, so the probability that there exists a van Kampen 2-complex of $G_\ell(m,d)$ of complexity $K$ satisfying $(*)$ converges to $0$ as $\ell$ goes to infinity. That is to say, a.a.s. every van Kampen diagram of $G_\ell(m,d)$ of complexity $K$ satisfies the inequality \[|Y^{(1)}| + \Red(Y) \geq (1-d-\varepsilon)|Y|\ell.\]\qed

\paragraph{Closed surfaces.}

Recall that a 2-complex $Y$ without isolated edges is called \textit{contractible} if there is an edge of $Y$ that is adjacent to one single face. If a 2-complex $Y$ is not contractible, then each of its edge is adjacent to at least $2$ faces, and we have $|Y^{(1)}| \leq \frac{1}{2}|Y|\ell$ where $\ell$ is the maximal boundary length of faces. As this contradicts the inequality of Theorem \ref{inequality for 2-complex} for \textit{any} density $d<1/2$, we have the following proposition.

\begin{prop} Let $(G_\ell(m,d))$ be a sequence of random groups with $m\geq2$ generators at density $d$. For any $d<1/2$ and $K>0$, a.a.s. every \textbf{reduced} van Kampen 2-complex of complexity $K$ of $G_\ell(m,d)$ can be contracted to a graph.
\end{prop}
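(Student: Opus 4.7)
The plan is to combine Theorem~\ref{inequality for 2-complex} with a direct edge-face incidence count, as hinted in the paragraph preceding the proposition. Since $d < 1/2$, I first fix $\varepsilon > 0$ with $1 - d - \varepsilon > \tfrac{1}{2}$, and a constant $K' = K'(K)$ to be chosen below. I work under the a.a.s.\ event that the inequality of Theorem~\ref{inequality for 2-complex} holds for every van Kampen 2-complex of $G_\ell(m,d)$ of complexity at most $K'$.

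Given a reduced van Kampen 2-complex $Y$ of complexity $K$, I iteratively delete pairs $(e,f)$ where $e$ is an edge adjacent to a unique face $f$, also removing any edges that become isolated. The process strictly decreases $|Y|$ and terminates at a sub-2-complex $Y^{\ast}$ of $Y$, still a reduced van Kampen 2-complex, in which every edge is adjacent to at least two faces. Showing $|Y^{\ast}| = 0$ is exactly showing that $Y$ contracts to a graph.

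Suppose for contradiction that $|Y^{\ast}| \geq 1$. An edge-face incidence count in $Y^{\ast}$ yields $2|Y^{\ast(1)}| \leq |Y^{\ast}|\ell$, since each face has boundary of length at most $\ell$ and each edge meets at least two faces. On the other hand, provided the complexity of $Y^{\ast}$ is at most $K'$, Theorem~\ref{inequality for 2-complex} applied to $Y^{\ast}$ together with $\Red(Y^{\ast}) = 0$ gives $|Y^{\ast(1)}| \geq (1 - d - \varepsilon)|Y^{\ast}|\ell > \tfrac{1}{2}|Y^{\ast}|\ell$, contradicting the previous bound. Hence $|Y^{\ast}| = 0$, i.e., $Y$ contracts to a graph.

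The main obstacle is the combinatorial check that the complexity of $Y^{\ast}$ is bounded by some constant $K' = K'(K)$ depending only on $K$. The face bound $|Y^{\ast}| \leq K$ is immediate. For the number of maximal arcs of $Y^{\ast}$ and the arc-decomposition of each remaining face boundary, each single contraction step modifies these quantities by a bounded amount (a vertex dropping to degree $1$ splits an arc; a vertex dropping to degree $2$ merges two arcs), so after the at most $K$ contraction steps both quantities remain $O(K)$. Taking $K' = CK$ for a suitable absolute constant $C$ then makes Theorem~\ref{inequality for 2-complex} applicable to $Y^{\ast}$ and closes the argument.
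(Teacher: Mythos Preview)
Your proof is correct and follows exactly the route the paper sketches in the paragraph preceding the proposition: pass to the non-collapsible core $Y^{\ast}$ and confront the incidence bound $|Y^{\ast(1)}|\le\tfrac12|Y^{\ast}|\ell$ with Theorem~\ref{inequality for 2-complex}. Note that you may in fact take $K'=K$: since face boundaries are cyclically reduced, $Y^{\ast}$ (and every intermediate stage) has no degree-$1$ vertices, so each maximal arc of $Y$ either survives intact or is removed entirely, and hence both the total number of maximal arcs and the number of arcs along any surviving face boundary can only decrease---your informal ``vertex dropping to degree~$1$ splits an arc'' never actually occurs.
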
\quad

\begin{comment}
On a van Kampen 2-complex homeomorphic to a surface, a \textit{reduction operation} is the following: We delete a reducible pair of faces along with a maximal path of edges causing the reduction, and glue the remaining boundary.

\begin{prop} Let $(S_\ell)$ be a sequence of 2-complexes of the same geometric form, which is a closed compact surface. If $S_\ell$ is the underlying 2-complex of a van Kampen 2-complex of $G_\ell(m,d)$, then a.a.s. it can be reduced to a graph via reduction operations.
\end{prop}
\end{comment}

\section{Phase transition for the existence of van Kampen 2-complexes}\label{section existence of 2-complexes}

In this section, we work on the proof of Theorem \ref{existence of 2-complexes}. 

\paragraph{Motivation and a counterexample.}
Let $(G_\ell(m,d))$ be a sequence of random groups at density $d$. We are interested in the converse of Theorem \ref{inequality for 2-complex} without the reduction part: if a 2-complex $Y_\ell$ with bounded complexity satisfies the inequality
\[|Y_\ell^{(1)}|\geq (1-d+s)|Y_\ell|\ell\]
with some $s>0$, does there exist a face labeling by relators and an edge labeling by generators, so that $Y_\ell$ becomes a reduced van Kampen 2-complex of $G_\ell(m,d)$?

The motivation for this question comes from the well-known phase transition at density $d = \frac{\lambda}{2}$, mentioned in \cite[p. 274]{Gro93}: if $d<\frac{\lambda}{2}$ then a.a.s. $G_\ell(m,d)$ has the $C'(\lambda)$ small cancellation condition; while if $d>\frac{\lambda}{2}$ then a.a.s. $G_\ell(m,d)$ does not have $C'(\lambda)$. The first assertion is a simple application of Theorem \ref{inequality for diagrams}. For the second assertion, we need to show that a.a.s. there exists a van Kampen 2-complex $D$ of $G_\ell(m,d)$ with exactly 2 faces of boundary length $\ell$, sharing a common path of length at least $\lambda\ell$ (Figure \ref{lambda diagram}).

\begin{figure}[h]
    \centering 
    \begin{tikzpicture}
           \fill[gray!20] (30:1) circle (1) ;
            \fill[gray!20] (150:1) circle (1) ;
            \draw[very thick] (0,0) -- (0,1);
            \draw[very thick] (0,0) arc (-150:150:1);
            \draw[very thick] (0,1) arc (30:330:1);
            \node at (0.25,0.5) {$\lambda\ell$};
    \end{tikzpicture}
    \caption{A van Kampen diagram denying the $C'(\lambda)$ condition.}
    \label{lambda diagram}
\end{figure}
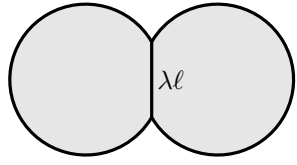 

The first detailed proof of such an existence is given in \cite[Theorem 2.1]{BNW20}, using an analog of the probabilistic pigeonhole principle. Another proof is given in \cite[Theorem 1.4]{Tsa21}. An intuitive explanation using the "dimension reasoning" is given in \cite{Oll05} p.30: The dimension of the set of couples $R_\ell\times R_ \ell$ is $2d\ell$. Sharing a common subword of length $L$ imposes $L$ equations, so the
"dimension" of the set of couples of relators sharing a common subword of length $\lambda\ell$ is $2d\ell-\lambda\ell$. If $d>\lambda/2$, then there will exist such a couple because the dimension will be positive. However, this argument is not true for \textit{any} 2-complex in general. Here is a counterexample: 

At density $d = 0.4$, let $(D_\ell)$ be a sequence of 2-complexes where $D_\ell$ is given in Figure \ref{figure counterexample}. The given inequality is satisfied because $|D_\ell^{(1)}| = 1.9 \ell > 1.8\ell = (1-d)|D_\ell|\ell$. However, the sub-diagram $D'_\ell$ gives $|D_\ell'^{(1)}| = 1.1\ell < 1.2\ell = (1-d)|D'_\ell|\ell$), which contradicts the isoperimetric inequality of Theorem \ref{inequality for 2-complex} and can not be a van Kampen diagram of $G_\ell(m,d)$.
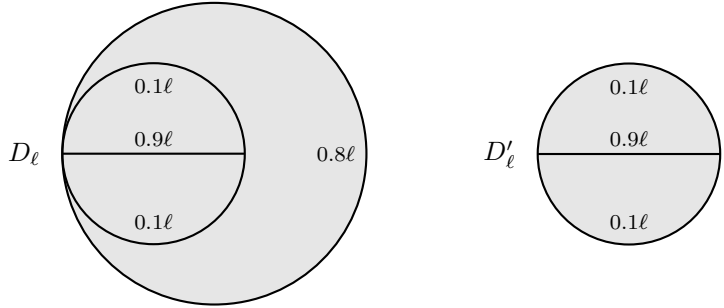
\begin{figure}[h]
\begin{center}
\begin{tikzpicture}
\fill [gray!20] circle (2);
\draw [thick] circle (2);
\draw [thick] (-0.8,0) circle (1.2);
\draw [thick] (-2,0) -- (0.4,0);
\node at (-0.8,0.2) {\footnotesize{$0.9\ell$}};
\node at (-0.8,-0.9) {\footnotesize{$0.1\ell$}};
\node at (-0.8,0.9) {\footnotesize{$0.1\ell$}};
\node at (1.6,-0) {\footnotesize{$0.8\ell$}};
\node at (-2.5,0) {$D_\ell$};
\end{tikzpicture}\qquad\qquad
\begin{tikzpicture}
\draw [color = white] circle (2);
\fill [gray!20] (-0.8,0) circle (1.2);
\draw [thick] (-0.8,0) circle (1.2);
\draw [thick] (-2,0) -- (0.4,0);
\node at (-0.8,0.2) {\footnotesize{$0.9\ell$}};
\node at (-0.8,-0.9) {\footnotesize{$0.1\ell$}};
\node at (-0.8,0.9) {\footnotesize{$0.1\ell$}};
\node at (-2.5,0) {$D'_\ell$};
\end{tikzpicture}
\end{center}
    \caption{A 2-complex that satisfies the isoperimetric inequality with a sub-2-complex that does not.}
    \label{figure counterexample}
\end{figure}

\subsection{Geometric form and Critical density}
Let us define the \textit{geometric form} of 2-complexes and the \textit{critical density} of a geometric form. To simplify the notations, for a 2-complex $Y = (V,E,F)$, we denote $\Edge(Y)$ as the set of geometric edges of $Y$ and $e$ instead of $\overline e$ for geometric edges.

\begin{defi}\label{geometric form} A \textbf{geometric form} of 2-complexes is a couple $(Y,\lambda)$ where $Y = (V,E,F)$ is a finite connected 2-complex without isolated edges, and $\lambda$ is a length labeled on edges defined by $\lambda: \Edge(Y) \to ]0,1], e \mapsto \lambda_e$, such that for every face $f$ of $Y$, the boundary length $|\partial f|$ is bounded by $1$.

A sequence of 2-complexes $(Y_\ell)$ is called \textup{of the geometric form} $(Y,\lambda)$ if $Y_\ell$ is obtained from $Y$ by dividing every edge $e$ of $Y$ into $\flr{\lambda_e\ell}$\footnote{We can replace $\flr{\lambda_e\ell}$ by any function with $\lambda\ell+o(\ell)$ and slightly smaller than $\lambda\ell$. Note that the sum of edge lengths on every face boundary of $Y_\ell$ is at most $\ell$} edges of length $1$.
\end{defi}

A sequence of 2-complexes $(Y_\ell)$ is briefly said to be \textit{of the same geometric form} if the geometric form $(Y,\lambda)$ is not specified. Note that the boundary length of every face $f$ of $Y_\ell$ is at most $\ell$. If $Z$ is a sub-2-complex of $Y$, we denote $Z\leq Y$. By convention, if $(Z_\ell)$ is a sequence of 2-complexes of the geometric form $(Z,\lambda_{|Z})$, we have $Z_\ell\leq Y_\ell$ for any integer $\ell$.

\begin{defi}\label{def density of Y} Let $(Y,\lambda)$ be a geometric form of 2-complexes. The \textbf{density} of $Y$ is
\[\dens(Y) := \frac{\sum_{e\in \Edge(Y)} \lambda_e}{|Y|}.\]
The \textbf{critical density} of $Y$ is 
\[\dens_c(Y) := \min_{Z\leq Y}\{\dens(Z)\}.\]
\end{defi}\quad

The intuition of this definition can be found in Lemma \ref{density of Y}: the density of $Y$ is actually the density of all possible van Kampen 2-complexes that fill $Y_\ell$.

\begin{rem}\label{theorem implies corollary} Taking Definition \ref{def density of Y} and Definition \ref{geometric form} together, we have
\[\dens(Y) = \frac{\sum_{e\in \Edge(Y)} \lambda_e}{|Y|} = \lim_{\ell\to\infty}\frac{\sum_{e\in \Edge(Y)} \flr{\lambda_e\ell}}{|Y_\ell|\ell} = \lim_{\ell\to\infty}\frac{|Y_\ell^{(1)}|}{|Y_\ell|\ell}.\]
Hence, the condition $``\dens_c(Y)+d>1"$ is equivalent to the following statement: Given $s>0$, for $\ell$ large enough, every sub-2-complex $Z_\ell$ of $Y_\ell$ satisfies \[|Z_\ell^{(1)}|\geq (1-d+s)|Z_\ell|\ell.\]

This argument shows that the second assertion of Theorem \ref{existence of 2-complexes} is equivalent to Corollary
\ref{cor of theorem}.
\end{rem}

\paragraph{Proof of Theorem \ref{existence of 2-complexes} (i).} We will use Theorem \ref{inequality for 2-complex} without the reduction part. Let $(G_\ell(m,d))$ be a sequence of random groups with $m$ generators at density $d$. Recall that a 2-complex $Y_\ell$ is said to be \textit{fillable} by $G_\ell(m,d)$ if there exists a \textit{reduced} van Kampen 2-complex of $G_\ell(m,d)$ whose underlying 2-complex is $Y_\ell$.

Let $(Y,\lambda)$ be a geometric form of 2-complexes with $\dens_c Y +d <1$. Let $(Y_\ell)$ be a sequence of 2-complexes of the geometric form $(Y,\lambda)$. We shall prove that a.a.s. the 2-complex $Y_\ell$ is \textit{not} fillable by the random group $G_\ell(m,d)$. By the definition of critical density, there exists a sub-2-complex $Z\leq Y$ satisfying $\dens Z +d <1$. Let $(Z_\ell)$ be the sequence of 2-complexes of the geometric form $(Z,\lambda_{|Z})$. We shall prove that a.a.s. $Z_\ell$ is not fillable by $G_\ell(m,d)$.

Let $\varepsilon >0$ such that $\dens Z = 1-d-3\varepsilon$. By definition,
\[\lim_{\ell\to\infty}\frac{|Z_\ell^{(1)}|}{|Z_\ell|\ell} = 1-d-3\varepsilon,\]
so for $\ell$ large enough,
\[|Z_\ell^{(1)}|\leq (1-d-2\varepsilon)|Z_\ell|\ell<(1-d-\varepsilon)|Z_\ell|\ell.\]

The complexity of $Z_\ell$ is $K=\max\left\{|Z|,|Z^{(1)}|,\max\{\frac{1}{\lambda_e} \,|\, e\in \Edge(Z)\}\right\}$, independent of $\ell$. By Theorem \ref{inequality for 2-complex} with $\varepsilon$ and $K$ given above, a.a.s. every van Kampen 2-complex $Z_\ell$ of $G_\ell(m,d)$ of complexity $K$ should satisfy 
\[|Z_\ell^{(1)}|\geq(1-d-\varepsilon)|Z_\ell|\ell.\]

Hence, a.a.s. the given 2-complex $Z_\ell$ is not fillable by $G_\ell(m,d)$, which implies that a.a.s. $Y_\ell$ is not fillable by $G_\ell(m,d)$. \qed

\subsection{The multidimensional intersection formula for random subsets}

To prove the second assertion of Theorem \ref{existence of 2-complexes}, we need the \textit{multidimensional intersection formula} for random subsets with density, introduced in \cite[Section 3]{Tsa21}.

Recall that $B_\ell$ is the set of cyclically reduced words of $X_m^\pm= \{x_1^\pm,\dots,x_m^\pm\}$ of length at most $\ell$, and that $|B_\ell| = (2m-1)^{\ell+o(\ell)}$. Let $k\geq 1$ be an integer. Denote $B_\ell^{(k)}$ as the set of $k$-tuples of pairwise distinct relators $(r_1,\dots,r_k)$ in $B_\ell$. Such a notation can be used for any set or any random set.

Note that $|B_\ell^{(k)}| = (2m-1)^{k\ell+o(\ell)}$. Recall that a sequence of fixed subsets $(\mathcal{Y}_\ell)$ of the sequence $(B_\ell^{(k)})$ is called \textit{densable with density} $\alpha\in \{-\infty\}\cup[0,1]$ if the sequence of real numbers $(\log_{|B_\ell^{(k)}|}|\mathcal{Y}_\ell|)$ converges to $\alpha$ (see \cite[p.272]{Gro93} and \cite[Definition 1.5]{Tsa21}). That is to say, $|\mathcal{Y}_\ell| = (2m-1)^{\alpha k\ell + o(\ell)}$.

\begin{defi}[Self-intersection partition, {\cite[Definition 3.4]{Tsa21}}] Let $(\mathcal{Y}_\ell)$ be a sequence of fixed subsets of the sequence $(B_\ell^{(k)})$. Let $0\leq i \leq k$ be an integer. The $i$-th self-intersection of $\mathcal{Y}_\ell$ is
\[S_{i,\ell} := \{(x,y)\in \mathcal{Y}_\ell^2\,|\, |x\cap y| = i\}\]
where $|x\cap y|$ is the number of common elements between the sets $x = (r_1,\dots,r_k)$ and $y = (r_1',\dots,r_k')$.
\end{defi}

The family of subsets $\{S_{i,\ell} \,| \, 0\leq i \leq k\}$ is a partition of $\mathcal{Y}_\ell^2$, called the \textit{self-intersection partition} of $\mathcal{Y}_\ell$. Note that $(S_{i,\ell})_{\ell\in\mathbb{N}}$ is a sequence of subsets of the sequence $\left((B_\ell^{(k)})^2\right)_{\ell\in \mathbb{N}}$, with density smaller than $\dens_{\left((B_\ell^{(k)})^2\right)}\left(\mathcal{Y}_\ell^2\right)=\dens_{(B_\ell^{(k)})}(\mathcal{Y}_\ell)$.

\begin{defi}[$d$-small self-intersection condition, {\cite[Definition 3.5]{Tsa21}}] Let $(\mathcal{Y}_\ell)$ be a sequence of fixed subsets of $\left(B_\ell^{(k)}\right)$ with density $\alpha$. Let $S_{i,\ell}$ with $0\leq i \leq k$ be its self-intersection partition. Let $d > 1-\alpha$. We say that $(\mathcal{Y}_\ell)$ satisfies the $d$-small self-intersection condition if, for every $1\leq i \leq k-1$,
\begin{equation*}
    \dens_{\left((B_\ell^{(k)})^2\right)}\left(S_{i,\ell}\right) < \alpha  - (1-d) \times \frac{i}{2k}.
\end{equation*}
\end{defi}

\begin{thm}[Multidimensional intersection formula, {\cite[Theorem 3.6]{Tsa21}}]\label{multidim intersection} Let $(R_\ell)$ be a sequence of permutation invariant random subsets of $(B_\ell)$ of density $d$. Let $(\mathcal{Y}_\ell)$ be a sequence of fixed subsets of $\left(B_\ell^{(k)}\right)$ of density $\alpha>1-d$. If $(\mathcal{Y}_\ell)$ satisfies the $d$-small self intersection condition, then the sequence of random subsets $(\mathcal{Y}_\ell \cap R_\ell^{(k)})$ is densable with density $\alpha+d-1$.

In particular, a.a.s. the random subset $\mathcal{Y}_\ell \cap R_\ell^{(k)}$ of $B_\ell^{(k)}$ is not empty.
\end{thm}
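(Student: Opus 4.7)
The plan is to apply the second moment method to the random variable $N_\ell := |\mathcal{Y}_\ell \cap R_\ell^{(k)}|$. First I would condition on the a.a.s.\ event that $|R_\ell| = (2m-1)^{d\ell + o(\ell)}$. By permutation invariance, the probability $p_{j,\ell}$ that $j$ prescribed pairwise distinct elements of $B_\ell$ all belong to $R_\ell$ depends only on $j$ (not on which elements), and one checks, in the spirit of Proposition \ref{k relators in R}, that $p_{j,\ell} = (2m-1)^{j(d-1)\ell + o(\ell)}$. This immediately yields the first moment estimate
\[\mathbb{E}[N_\ell] \;=\; |\mathcal{Y}_\ell| \cdot p_{k,\ell} \;=\; (2m-1)^{k(\alpha + d - 1)\ell + o(\ell)}.\]

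Next I would bound the second moment by decomposing $\mathcal{Y}_\ell^2$ along the self-intersection partition $(S_{i,\ell})_{0 \leq i \leq k}$. For $(x,y) \in S_{i,\ell}$, the union $x \cup y$ has $2k - i$ distinct elements, so $\Pr(x, y \in R_\ell^{(k)}) = p_{2k - i, \ell}$. The $i = 0$ term contributes $|S_{0,\ell}| \cdot p_{2k,\ell} \sim \mathbb{E}[N_\ell]^2$, while the diagonal $i = k$ term contributes at most $k!\, |\mathcal{Y}_\ell| \cdot p_{k,\ell} = O(\mathbb{E}[N_\ell])$. For each intermediate $1 \leq i \leq k - 1$, the $d$-small self-intersection condition supplies some $\varepsilon_i > 0$ with $\dens(S_{i,\ell}) < \alpha - (1-d)\,i/(2k) - \varepsilon_i/(2k)$, so that
\[|S_{i,\ell}| \cdot p_{2k-i,\ell} \;\leq\; (2m-1)^{[\,2k\alpha - (1-d)i + (2k-i)(d-1) - \varepsilon_i\,]\,\ell + o(\ell)} \;=\; (2m-1)^{[\,2k(\alpha + d - 1) - \varepsilon_i\,]\,\ell + o(\ell)},\]
which is $o(\mathbb{E}[N_\ell]^2)$. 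Summing the finitely many intermediate values of $i$ yields $\mathbb{E}[N_\ell^2] = (1 + o(1))\,\mathbb{E}[N_\ell]^2$, so that $\Var(N_\ell) = o(\mathbb{E}[N_\ell]^2)$.

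Chebyshev's inequality then forces $N_\ell / \mathbb{E}[N_\ell] \to 1$ in probability, which simultaneously gives that $\mathcal{Y}_\ell \cap R_\ell^{(k)}$ is a.a.s.\ nonempty and, after taking $\log_{|B_\ell^{(k)}|}$, that the sequence is densable with density $\alpha + d - 1$. I expect the main obstacle to be the rigorous handling of the permutation-invariant model: because the events $\{r \in R_\ell\}$ are not independent, the asymptotic $p_{j,\ell} \approx (2m-1)^{j(d-1)\ell}$ must be obtained by conditioning on the value of $|R_\ell|$ (under which $R_\ell$ becomes a uniformly random subset of $B_\ell$ of that size and $p_{j,\ell}$ is a hypergeometric ratio of the form $\binom{|B_\ell|-j}{|R_\ell|-j}/\binom{|B_\ell|}{|R_\ell|}$), and then checking that these asymptotics hold uniformly on the conditioning event so as to justify both the moment estimates and the convergence in probability needed to upgrade nonemptiness to full densability.
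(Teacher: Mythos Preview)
The paper does not actually prove this theorem: it is quoted verbatim from \cite[Theorem 3.6]{Tsa21} and used as a black box in Section~\ref{section existence of 2-complexes}. There is therefore no proof in the present paper to compare your proposal against.

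That said, your sketch is the natural second-moment argument and is almost certainly what \cite{Tsa21} does (the very definition of the $d$-small self-intersection condition is tailored so that the intermediate terms $1\leq i\leq k-1$ in the decomposition of $\mathbb{E}[N_\ell^2]$ become negligible). Your bookkeeping is correct: the $i=0$ term recovers $\mathbb{E}[N_\ell]^2$, the $i=k$ term is $O(\mathbb{E}[N_\ell])=o(\mathbb{E}[N_\ell]^2)$ because $\alpha+d-1>0$, and each intermediate term is $o(\mathbb{E}[N_\ell]^2)$ by hypothesis. Your last paragraph also correctly identifies the only genuine technical point, namely that permutation invariance (rather than independence) forces one to condition on $|R_\ell|$ and compute $p_{j,\ell}$ as a hypergeometric ratio; this is handled in \cite{Tsa21} and is the analogue of Proposition~\ref{k relators in R} in the present paper.
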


\subsection{Proof of Theorem \ref{existence of 2-complexes} (ii)} Let $(Y_\ell)$ be a sequence of 2-complexes of the same geometric form $(Y,\lambda)$ with $k$ faces. In the following, we denote $\mathcal{Y}_\ell$ as the set of pairwise distinct relators in $B_\ell$ that fills $Y_\ell$, which is a subset of $B_\ell^{(k)}$.

Let $(G_\ell(m,d))$ be a sequence of random groups at density $d$, defined by $G_\ell(m,d) = \langle X_m|R_\ell \rangle$ where $(R_\ell)$ is a sequence of random subsets with density $d$. The intersection $\mathcal{Y}_\ell \cap R_\ell^{(k)}$ is hence the set of $k$-tuples of pairwise distinct relators in $R_\ell$ that fills $Y_\ell$. We want to prove that this intersection is not empty, so that $Y_\ell$ is fillable by $G_\ell(m,d)$. According to Theorem \ref{multidim intersection}, it remains to prove that if $\dens_c Y>1-d$, then the sequence $(\mathcal{Y}_\ell)$ is densable and satisfies the $d$-small self intersection condition.\\

We will prove in Lemma \ref{density of Y} that $(\mathcal{Y}_\ell)$ is densable with density exactly $\dens(Y)$, and in Lemma \ref{d-small self intersection} that it satisfies the $d$-small self intersection condition.

\begin{lem}\label{number of all fillings} Let $\overline{\mathcal{Y}_\ell}$ be the set of $k$-tuples of relators in $B_\ell$ that fills $Y_\ell$, not necessarily pairwise distinct. If $Y_\ell$ is fillable by $B_\ell$, then
\[\dens_{(B_\ell^{k})}(\overline{\mathcal{Y}_\ell}) = \dens Y.\]
\end{lem}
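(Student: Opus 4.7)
The plan is to identify $|\overline{\mathcal{Y}_\ell}|$ with the number of edge labelings $\varphi_1 : E(Y_\ell) \to X_m^\pm$ such that every face boundary of $Y_\ell$ is cyclically reduced, and then to obtain matching upper and lower bounds of the form $(2m-1)^{\dens(Y) \cdot k\ell + O(1)}$. Dividing by $\log_{2m-1}|B_\ell^k| = k\ell + o(\ell)$ will then yield $\dens_{(B_\ell^k)}(\overline{\mathcal{Y}_\ell}) = \dens(Y)$.

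The first step is to set up the bijection. A $k$-tuple of filling relators uniquely determines, and is uniquely determined by, a valid edge labeling: since $Y_\ell$ has no isolated edges, every edge lies on the boundary of some face, so its label is prescribed by the relator filling that face, and conversely a valid labeling reads off exactly one filling relator per face (starting from the canonical starting edge of each face boundary).

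For the upper bound, the idea is to enumerate edge labelings arc by arc, exploiting the fact that the number of maximal arcs of $Y_\ell$ equals the number of edges of $Y$, a constant independent of $\ell$. Each arc of length $L$ admits at most $2m(2m-1)^{L-1}$ reduced labelings. Multiplying over arcs and using $\sum_{e \in \Edge(Y)} \flr{\lambda_e \ell} = \dens(Y) \cdot k\ell + O(1)$ yields $|\overline{\mathcal{Y}_\ell}| \leq (2m-1)^{\dens(Y) \cdot k\ell + O(1)}$. For the lower bound, the hypothesis that $Y_\ell$ is fillable by $B_\ell$ provides one concrete valid labeling $\varphi_1^0$, from which many others can be built by modifying only the interior letters of each maximal arc while keeping the first and last letters fixed to those of $\varphi_1^0$. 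A standard count of reduced words of length $L$ with prescribed first and last letters gives $\Theta((2m-1)^{L-O(1)})$ choices per arc, so $|\overline{\mathcal{Y}_\ell}| \geq (2m-1)^{\dens(Y) \cdot k\ell - O(1)}$.

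The main obstacle is handling the constraints coming from vertices of valency greater than $2$, where several arcs meet and face boundaries couple the labelings of distinct arcs. This is neutralized in the lower bound by freezing the arc-endpoint labels, so that the high-valency vertices see exactly the same local data they saw in $\varphi_1^0$; since the number of such vertices depends only on $Y$ and not on $\ell$, the resulting restriction costs only a multiplicative constant, which disappears when passing to densities.
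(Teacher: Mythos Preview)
Your proposal is correct and follows essentially the same approach as the paper: identify $|\overline{\mathcal{Y}_\ell}|$ with the number of edge labelings whose face-boundary words are cyclically reduced, then bound that count arc by arc, using the fillability hypothesis to guarantee at least one admissible configuration at the high-valency vertices. The only cosmetic difference is that the paper packages the arc-endpoint data into a single constant $C$ (the number of ``vertex labelings'') and uses it symmetrically for both the upper and lower bound, whereas you use one fixed vertex labeling (extracted from a given filling) for the lower bound and simply drop the vertex constraints for the upper bound; either way the contribution is $O(1)$ in the exponent and vanishes after dividing by $k\ell$.
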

\begin{proof} We shall estimate the number $|\overline{ \mathcal{Y}_\ell}|$ by counting the number of labelings on edges of $Y_\ell$ that produce van Kampen 2-complexes with respect to all possible relators $B_\ell$. 

We start by filling edges in the neighborhoods of vertices that are originally vertices of the geometric form $Y$ (before dividing). Consider the set of oriented edges of $Y_\ell$ starting at some vertex that is originally a vertex of $Y$ before dividing. A \textit{vertex labeling} is a labeling on these edges by $X_m^\pm$ that does not produce any reducible pair of edges on face boundaries: for every pair of different edges $e_1, e_2$ starting at the same vertex, if they are labeled by the same generator $x\in X_m^\pm$, then the path $e_1\moinsun e_2$ is not cyclically part of any face boundary loop. Since the 2-complex $Y_\ell$ is fillable, the set of vertex labelings is not empty. Denote $C\geq 1$ as the number of vertex labelings of $Y_\ell$.

As $m\geq 2$ and $\flr{\lambda_e\ell}\geq 3$ for $\ell$ large enough, if there exists a vertex labeling, then the other edges of $Y_\ell$ can be completed as a van Kampen 2-complex of $B_\ell$, and the number $C$ depends only on the geometric form $Y$.

To label the remaining $\flr{\lambda\ell}-2$ edges on the arc divided from the edge $e\in \Edge(Y)$, there are $2m-1$ choices for the first $\flr{\lambda\ell}-3$ edges, and $2m-2$ or $2m-1$ choices for the last edge. So

\[C\prod_{e\in \Edge(Y)}(2m-1)^{\flr{\lambda_e\ell}-3}(2m-2) \leq |\overline{\mathcal{Y}_\ell}| \leq C\prod_{e\in \Edge(Y)}(2m-1)^{\flr{\lambda_e\ell}-2}.\]
Recall that $k = |Y| = |Y_\ell|$ and that $|B_\ell^k| = (2m-1)^{k\ell+o(\ell)}$. We have
\[\dens_{(B_\ell^{k})}(\overline{\mathcal{Y}_\ell}) =\frac{\sum_{e\in \Edge(Y)}\lambda_e}{|Y|} = \dens Y.\]
\end{proof}\quad

\begin{lem}\label{density of Y} If $\dens_c Y >1/2$ and $Y_\ell$ is fillable by $B_\ell$, then $(\mathcal{Y}_\ell)$ is densable in $(B_\ell^{(k)})$ and \[\dens_{(B_\ell^{(k)})}(\mathcal{Y}_\ell) = \dens Y.\]
\end{lem}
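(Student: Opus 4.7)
The plan is to leverage Lemma \ref{number of all fillings} and show that enforcing pairwise distinctness of the $k$ relators only removes a set of strictly smaller density. First, I would observe that $|B_\ell^{(k)}|$ and $|B_\ell|^k$ agree up to a factor of $1 + O(1/|B_\ell|)$, so densities in these two ambient sets coincide asymptotically; thus by Lemma \ref{number of all fillings} one has $\dens_{B_\ell^{(k)}}(\overline{\mathcal{Y}_\ell}) = \dens Y$, and it suffices to bound $|\overline{\mathcal{Y}_\ell} \setminus \mathcal{Y}_\ell|$ (the count of fillings with at least one repeated relator) by a set of density strictly below $\dens Y$ in $B_\ell^k$.

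By a union bound it is enough to fix indices $i \neq j$ and estimate the number of fillings $(r_1,\ldots,r_k) \in \overline{\mathcal{Y}_\ell}$ with $r_i = r_j$. Such fillings are exactly the edge-labelings of $Y_\ell$ that spell out the same word along $\partial f_i$ and along $\partial f_j$, which forces the edges on $\partial f_j$ to coincide in their labels with the edges on $\partial f_i$ at corresponding positions. Geometrically, they are in bijection with fillings of a quotient geometric form $Y^{ij}$ obtained from $Y$ by folding $f_j$ onto $f_i$ along their boundaries, and an argument parallel to Lemma \ref{number of all fillings} applied to $Y^{ij}_\ell$ then produces at most $(2m-1)^{(\sum_{e \in \Edge(Y^{ij})} \lambda_e)\ell + o(\ell)}$ such fillings. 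In $B_\ell^k$ this corresponds to the density $\frac{1}{k}\sum_{e \in \Edge(Y^{ij})} \lambda_e$ (or to $-\infty$ if $Y^{ij}_\ell$ is itself not fillable).

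The crucial step is to show that this density is strictly less than $\dens Y = \frac{1}{k}\sum_{e \in \Edge(Y)} \lambda_e$, equivalently that at least one genuinely new edge identification occurs in the quotient. This is where I would use the hypothesis $\dens_c Y > 1/2$: if $\partial f_i$ and $\partial f_j$ were identical as oriented edge sequences so that no new identification occurred in $Y^{ij}$, then the sub-2-complex $Z$ of $Y$ consisting of $\{f_i,f_j\}$ together with their common boundary edges would satisfy $|Z| = 2$ and $\sum_{e \in \Edge(Z)} \lambda_e \leq |\partial f_i| \leq 1$, giving $\dens Z \leq 1/2$ and contradicting $\dens_c Y > 1/2$. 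Hence at least one edge of $\partial f_j$ is genuinely identified with a distinct edge of $\partial f_i$, and $\sum_{e \in \Edge(Y)} \lambda_e - \sum_{e \in \Edge(Y^{ij})} \lambda_e \geq \min_{e \in \Edge(Y)} \lambda_e > 0$ is a positive constant independent of $\ell$.

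Summing this strict gap over the finitely many pairs $(i,j)$ shows $|\overline{\mathcal{Y}_\ell} \setminus \mathcal{Y}_\ell|$ has density strictly less than $\dens Y$, so $|\mathcal{Y}_\ell|$ and $|\overline{\mathcal{Y}_\ell}|$ are logarithmically equivalent, yielding $\dens_{B_\ell^{(k)}}(\mathcal{Y}_\ell) = \dens Y$ and in particular densability. The hardest part will be making the quotient $Y^{ij}$ rigorously defined when $\partial f_i$ or $\partial f_j$ traverse some edges multiple times, or when the two boundaries share only part of their edges, and verifying that the counting argument of Lemma \ref{number of all fillings} carries over to $Y^{ij}_\ell$ in these cases.
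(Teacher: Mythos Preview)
Your overall strategy coincides with the paper's: both write $\mathcal{Y}_\ell = \overline{\mathcal{Y}_\ell} \setminus \bigcup_{\{f_1,f_2\}} \{\text{fillings with } r_{f_1} = r_{f_2}\}$ and show each piece of the union has density strictly below $\dens Y$. The difference is in how each piece is bounded. You pass to a quotient complex $Y^{ij}$ and re-apply Lemma~\ref{number of all fillings}; the paper avoids the quotient entirely by a direct overcount. Namely, for the two-face subcomplex $Z$ on $\{f_1,f_2\}$: once the common relator $r=r_{f_1}=r_{f_2}$ is chosen (at most $(2m-1)^{|\partial f_1|\ell}$ possibilities), every edge of $Z$ is determined, and the edges of $Y_\ell$ outside $Z$ are filled freely. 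This gives density at most $\tfrac{1}{k}\bigl(|\partial f_1| + \sum_{e\notin\Edge(Z)}\lambda_e\bigr)$, and the strict gap follows in one line from $\sum_{e\in\Edge(Z)}\lambda_e > 1 \geq |\partial f_1|$, which is precisely $\dens Z > 1/2$. No folding, no subdivision.

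Your route would also go through, but be aware that the bound $\sum_{e}\lambda_e - \sum_{e}\lambda_e(Y^{ij}) \geq \min_{e}\lambda_e$ is not automatic. The identification of $\partial f_j$ with $\partial f_i$ is naturally defined on \emph{unit} edges of $Y_\ell$, not on arcs of $Y$; when the arc decompositions of $\partial f_i$ and $\partial f_j$ do not align, forming $Y^{ij}$ as a geometric form forces subdivision of edges of $Y$, after which $\min_e \lambda_e$ is no longer the original one. Your contrapositive only shows that \emph{some} identification occurs, not that a fixed fraction of $\ell$ unit edges are merged. A positive $\ell$-independent gap can still be extracted (the set of coinciding positions is governed by finitely many arc breakpoints), but it takes exactly the care you flag in your last paragraph. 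The paper's bookkeeping sidesteps all of this.
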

\begin{proof} Suppose that $|Y| \geq 2$. The case $|Y|=1$ is trivial. Let $Z$ be a sub-2-complex of $Y$ with exactly two faces $f_1,f_2$. As $\dens Z \geq \dens_c Y > \frac{1}{2}$, by Definition \ref{def density of Y}, we have
\[\sum_{e\in \Edge(Z)}\lambda_e > \frac{1}{2}|Z| = 1 \geq |\partial f_1|.\]

Let $\overline{\mathcal{Y}^Z_\ell}$ be the set of fillings of $Y_\ell$ by $B_\ell$ such that the two faces of $Z$ are filled by the same relator. By the same arguments of the previous lemma,

\[|\overline{\mathcal{Y}^Z_\ell}|\leq C(2m-1)^{|\partial f_1|}\prod_{e\in \Edge(Y)\backslash\Edge(Z)}(2m-1)^{\flr{\lambda_e\ell}-2},\]
so 
\begin{align*}\dens_{(B_\ell^{k})}(\overline{\mathcal{Y}^Z_\ell}) & \leq \frac{1}{|Y|}\left[\sum_{e\in \Edge(Y)}\lambda_e  + \left(|\partial f_1| -  \sum_{e\in\Edge(Z)}\lambda_e\right)\right]\\
& < \frac{\sum_{e\in \Edge(Y)}\lambda_e}{|Y|}\\
& =\dens Y = \dens_{(B_\ell^{k})}(\overline{\mathcal{Y}_\ell}).
\end{align*}
Knowing that 
\[\mathcal{Y}_\ell = \left.\overline{\mathcal{Y}_\ell} \Big{\backslash} \bigcup_{Z<Y,|Z|=2}\overline{\mathcal{Y}^Z_\ell}\right.,\]
we have
\[|\overline{\mathcal{Y}_\ell}| - \sum_{Z<Y,|Z|=2} |\overline{\mathcal{Y}^Z_\ell}| \leq|\mathcal{Y}_\ell| \leq|\overline{\mathcal{Y}_\ell}|.\]
There are $\binom{|Y|}{2}$ terms in the sum, in every term we have $\dens_{(B_\ell^{k})}(\overline{\mathcal{Y}^Z_\ell}) < \dens_{(B_\ell^{k})}(\overline{\mathcal{Y}_\ell})$, so (cf. \cite[Proposition 2.7 and Proposition 2.8]{Tsa21})
\[\dens_{(B_\ell^k)}(\mathcal{Y}_\ell) = \dens_{(B_\ell^k)}(\overline{\mathcal{Y}_\ell)}.\]

Together with Lemma \ref{number of all fillings}, we have $\dens_{(B_\ell^k)}(\mathcal{Y}_\ell) = \dens Y$. As $\dens_{(B_\ell^k)}(B_\ell^{(k)}) = 1$, we get
\[\dens_{(B_\ell^{(k)})}(\mathcal{Y}_\ell) = \dens Y.\]
\end{proof}\quad

\begin{lem}\label{d-small self intersection} Suppose that $\dens_c Y > 1-d$. Let $S_{i,\ell}$ be the $i$-th self intersection of the set $\mathcal{Y}_\ell$. We have 
\[\dens_{\left((B_\ell^{(k)})^2\right)}\left(S_{i,\ell}\right) < \dens Y  - (1-d) \times \frac{i}{2k}.\]
\end{lem}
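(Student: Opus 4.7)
The plan is to bound $|S_{i,\ell}|$ by parametrising the overlap of two fillings of $Y_\ell$, in the spirit of how $\mathcal{Y}_\ell$ itself was counted in Lemmas \ref{number of all fillings} and \ref{density of Y}. A pair $(x,y) \in S_{i,\ell}$ consists of two $k$-tuples of relators filling $Y_\ell$ sharing exactly $i$ relators; the positions of these shared relators single out two sub-2-complexes $Z_x, Z_y \leq Y$ with $|Z_x|=|Z_y|=i$, together with a face-bijection $\sigma: Z_x \to Z_y$ identifying faces of $x$ with faces of $y$ carrying the same relator. The number of such triples $(Z_x, Z_y, \sigma)$ is a constant depending only on $Y$, so I will first fix such a pattern and sum at the end.

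For a fixed pattern, I plan to pick $x \in \mathcal{Y}_\ell$ freely, which contributes $(2m-1)^{\dens(Y)\cdot k\ell + o(\ell)}$ choices by Lemma \ref{density of Y}. The filling $y$ must then match $x$ on $Z_y$ through $\sigma$, so the labels of all edges in $\Edge(Z_y)$ (viewed inside $Y_\ell$) are already forced. It remains to label the remaining $k-i$ faces of $Y_\ell$ consistently, and exactly the arc-counting argument of Lemma \ref{number of all fillings} (applied relative to the already-labelled vertex neighbourhoods on $\partial Z_y$) bounds the number of completions by
\[
C \cdot (2m-1)^{\left(\sum_{e \in \Edge(Y) \setminus \Edge(Z_y)} \lambda_e\right)\ell + o(\ell)}.
\]
Summing over the constantly many patterns and dividing by $\log_{2m-1}|(B_\ell^{(k)})^2| = 2k\ell + o(\ell)$ will yield
\[
\dens_{((B_\ell^{(k)})^2)}(S_{i,\ell}) \;\leq\; \dens Y - \frac{1}{2k}\min_{Z \leq Y,\,|Z|=i}\sum_{e \in \Edge(Z)}\lambda_e \;=\; \dens Y - \frac{i}{2k}\min_{Z \leq Y,\,|Z|=i}\dens(Z).
\]
Because every sub-2-complex satisfies $\dens(Z) \geq \dens_c Y > 1-d$ by hypothesis and $i \geq 1$, the right-hand side is strictly less than $\dens Y - (1-d)\frac{i}{2k}$, which is precisely the required inequality.

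The step I expect to need the most care is the relative completion count. After forcing the $i$ shared faces, I must argue that the remaining labelling is really controlled only by $\Edge(Y)\setminus\Edge(Z_y)$: extra boundary constraints appear at vertices of $Y$ where $Z_y$ meets its complement, but these can only \emph{decrease} the number of completions, so the upper bound is preserved. This is essentially the same ``$(2m-1)^{\lfloor\lambda_e\ell\rfloor-2}$ choices per arc'' estimate as in Lemma \ref{number of all fillings}, carried out in a relative setting; once this is checked, the density arithmetic above closes the argument.
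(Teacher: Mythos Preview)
Your approach is correct and essentially identical to the paper's proof: the paper likewise decomposes $S_{i,\ell}$ according to the pair of sub-$2$-complexes $(Z,W)$ of $Y$ with $|Z|=|W|=i$ carrying the shared relators (your $(Z_x,Z_y)$), chooses the first filling in $\mathcal{Y}_\ell$ freely, absorbs the face-bijection as an $i!$ factor, and bounds the number of completions of the second filling by the arc-count $C\prod_{e\in\Edge(Y)\setminus\Edge(W)}(2m-1)^{\lfloor\lambda_e\ell\rfloor-2}$, obtaining $\dens S_\ell(Z,W)\le \dens Y-\tfrac{i}{2k}\dens W<\dens Y-\tfrac{i}{2k}(1-d)$ and concluding by a finite union. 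Your caveat about the relative completion count (extra constraints at $\partial Z_y$ only help the upper bound) is exactly the right justification and matches the paper's implicit use of Lemma~\ref{number of all fillings}.
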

\begin{proof} Let $Z$, $W$ be two sub-2-complexes of $Y$ with $|Z|=|W|=i<k=|Y|$. Let $(Z_\ell),(W_\ell)$ be the corresponding sequences of 2-complexes of the geometric forms $Z$ and $W$ respectively. Denote $S_\ell(Z,W)$ as the set of pairs of pairwise distinct fillings $((r_1,\dots,r_k),(r_1'\dots,r_k'))$ of $Y_\ell$ by all possible relators $B_\ell$ such that, the $i$ relators in the first filling $(r_1,\dots,r_k)$ corresponding to $Z_\ell$ are identical to the $i$ relators in the second filling $(r_1',\dots,r_k')$ corresponding to $W_\ell$, and that the remaining $2k-2i$ relators are pairwise different, not repeating the relators in $Z_\ell$ and $W_\ell$.

Let us estimate the cardinality $|S_\ell(Z,W)|$. Firstly, fill the $k$-tuple $(r_1,\dots,r_k)$ so the $i$ relators in the next $k$-tuple $(r_1',\dots,r_k')$ corresponding to the sub 2-complex $W_\ell$ is determined. There are at most $i!$ choices for ordering these $i$ relators. To fill the remaining $k-i$ relators in $(r_1',\dots,r_k')$, by the same arguments of Lemma \ref{number of all fillings}, we get
\[|S_\ell(Z,W)| \leq |\mathcal{Y}_\ell|\times i! \times C\prod_{e\in \Edge(Y)\backslash\Edge(W)}(2m-1)^{\flr{\lambda_e\ell}-2}.\]

Recall that the density of $Y$ is defined by $\frac{1}{|Y|}\left(\sum_{e\in Edge(Y)}\lambda_e\right)$, and that $\dens W \geq \dens_c Y >1-d$ by Definition \ref{def density of Y}. Together with the hypothesis $\dens_c Y >1-d$, we have
\begin{align*} \dens_{\left((B_\ell^{(k)})^2\right)}\left(S_\ell(Z,W)\right) & \leq \frac{1}{2k} \left(\sum_{e\in Edge(Y)} \lambda_e + \sum_{e\in \Edge(Y)\backslash\Edge(W)} \lambda_e\right)\\
& = \frac{1}{2k}\left(2\sum_{e\in Edge(Y)}\lambda_e - \sum_{e\in \Edge(W)}\lambda_e\right)\\
& = \dens Y - \frac{i}{2k}\dens W\\
& < \dens Y  - \frac{i}{2k}(1-d).
\end{align*}

Note that \[\displaystyle{S_{i,\ell} = \bigcup_{Z<Y,W<Y,|Z|=|W|=i}S_\ell(Z,W)}.\]

It is a union of $\binom{k}{i}^2$ subsets of densities strictly smaller than $\dens Y  - \frac{i}{2k}(1-d)$. According to \cite[Proposition 2.7]{Tsa21}, we have

\[\dens_{\left((B_\ell^{(k)})^2\right)}\left(S_{i,\ell}\right) < \dens Y - \frac{i}{2k}(1-d).\]

\end{proof}
This completes the proof of Theorem \ref{existence of 2-complexes}.\\

\section{Phase transitions for small cancellation conditions}

Let us recall small cancellation notions in \cite[p.240]{LS77}. A \textit{piece} with respect to a set of relators is a cyclic sub-word that appears at least twice. A group presentation satisfies the $C'(\lambda)$ small cancellation condition for some $0<\lambda<1$ if the length of a piece is at most $\lambda$ times the length of any relator that it appears. It satisfies the $C(p)$ small cancellation condition for some integer $p\geq 2$ if no relator is a product of fewer than $p$ pieces. 

\paragraph{The $C'(\lambda)$ condition.} Let $(G_\ell(m,d))$ be a sequence of random groups at density $d$. It is known that there is a phase transition at density $d = \lambda/2$ for the $C'(\lambda)$ condition (see \cite[p.274]{Gro93}, \cite[Theorem 2.1]{BNW20} and \cite[Theorem 1.4]{Tsa21}). We give here a much simpler proof using Theorem \ref{existence of 2-complexes}.

\begin{prop}\label{lambda small cancellation plus} Let $0<\lambda<1$. Let $(G_\ell(m,d))$ be a sequence of random groups at density $d$. There is a phase transition at density $d = \lambda/2$:
\begin{enumerate}[(i)]
    \item If $d<\lambda/2$, then a.a.s. $G_\ell(m,d)$ satisfies $C'(\lambda)$.
    \item If $d>\lambda/2$, then a.a.s. $G_\ell(m,d)$ does not satisfy $C'(\lambda)$.
\end{enumerate}
\end{prop}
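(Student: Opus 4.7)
The plan is to apply Theorem \ref{existence of 2-complexes} to the obvious 2-complex encoding a piece of length $\lambda\ell$. Let $(Y, \lambda_{\mathrm{geo}})$ be the geometric form with two faces $f_1, f_2$ glued along a common edge $e_0$ of length $\lambda$, the remaining boundary of each face being a single arc of length $1-\lambda$ (as in Figure \ref{lambda diagram}). A direct computation gives
\[ \dens(Y) = \frac{\lambda + 2(1-\lambda)}{2} = 1 - \frac{\lambda}{2}, \]
and the only non-empty proper sub-2-complex (without isolated edges) is a single face, of density $1$, so $\dens_c(Y) = 1 - \lambda/2$.

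For (ii), assume $d > \lambda/2 = 1 - \dens_c(Y)$. The 2-complex $Y_\ell$ is fillable by $B_\ell$, since one can concatenate any cyclically reduced word $u$ of length $\flr{\lambda\ell}$ with two distinct cyclically reduced extensions to obtain two different cyclically reduced words of length $\ell$. Theorem \ref{existence of 2-complexes}(ii) then provides a.a.s.\ a reduced filling of $Y_\ell$ in $G_\ell(m,d)$, i.e.\ two distinct relators of $R_\ell$ sharing a common cyclic subword of length $\flr{\lambda\ell}$, which is a piece violating $C'(\lambda)$.

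For (i), suppose $d < \lambda/2$. Any piece of length $\geq \lambda\ell$ violating $C'(\lambda)$ falls into one of two shapes: either it is shared between two distinct (or mutually inverse) relators, giving a filling of the same $Y_\ell$ as above; or it appears at two distinct cyclic positions of a single relator, giving a filling of $Y'_\ell$, where $Y'$ is the geometric form consisting of a single face self-identified along an edge $e_0$ of length $\lambda$. A similar count gives $\dens(Y') = 1 - \lambda$, and $Y'$ has no proper non-empty sub-2-complex without isolated edges, hence $\dens_c(Y') = 1 - \lambda$. Since $d < \lambda/2$ implies both $d < 1 - \dens_c(Y) = \lambda/2$ and $d < 1 - \dens_c(Y') = \lambda$, Theorem \ref{existence of 2-complexes}(i) rules out both $Y_\ell$ and $Y'_\ell$ a.a.s., so $C'(\lambda)$ holds.

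The main technical care I anticipate lies in enumerating the finitely many variants of $Y$ and $Y'$ arising from orientation and inversion (for instance $u = u$ versus $u = u^{-1}$ in the self-piece case, or the two orientations of $e_0$ in the two-face case), but each variant produces the same critical density $1 - \lambda/2$ or $1 - \lambda$ up to symmetry, so Theorem \ref{existence of 2-complexes} applies uniformly and the density computation above is all that is needed.
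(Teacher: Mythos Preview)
Your argument for part (ii) is correct and coincides with the paper's: you apply Theorem~\ref{existence of 2-complexes}(ii) to the two-face geometric form $Y$ with $\dens_c(Y)=1-\lambda/2$.

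For part (i), however, there is a genuine gap. Theorem~\ref{existence of 2-complexes}(i) is a statement about \emph{one fixed sequence} $(Y_\ell)$: it tells you that a.a.s.\ there is no reduced filling of that particular 2-complex (with middle arc of length $\lfloor\lambda\ell\rfloor$ and outer arcs of length $\lfloor(1-\lambda)\ell\rfloor$). But a violation of $C'(\lambda)$ produces a piece $w$ with $|w|\geq\lambda|r_1|$ where $|r_1|,|r_2|\leq\ell$ can be anything, so the resulting 2-complex has arc lengths $|w|$, $|r_1|-|w|$, $|r_2|-|w|$ that need not match your $Y_\ell$ at all. Even after trimming $w$ to a minimal piece, you obtain a whole family of 2-complexes indexed by $(|r_1|,|r_2|,|w|)$, not a single sequence of a fixed geometric form. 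Ruling out your specific $Y_\ell$ and $Y'_\ell$ therefore does not establish $C'(\lambda)$; you would need a union bound over $O(\ell^3)$ shapes, each with its own ``a.a.s.'', and Theorem~\ref{existence of 2-complexes}(i) does not provide the uniform rate required for that.

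The paper avoids this by using Theorem~\ref{inequality for 2-complex} directly: from any hypothetical $C'(\lambda)$-violation one builds the two-face diagram $D$ (whatever its exact arc lengths), observes that $|D^{(1)}|=|r_1|+|r_2|-|w|<(1-\lambda/2)|D|\ell$, and this contradicts the isoperimetric inequality, which holds uniformly for \emph{all} van Kampen 2-complexes of bounded complexity. In effect, Theorem~\ref{existence of 2-complexes}(i) is itself derived from Theorem~\ref{inequality for 2-complex}, so your detour through it loses precisely the uniformity you need; going to Theorem~\ref{inequality for 2-complex} directly, as the paper does, is both simpler and necessary here.
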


\begin{proof}\quad
\begin{enumerate}[(i)]
    \item Let us prove by contradiction. Suppose that a.a.s. $G_\ell(m,d)$ does not satisfy $C'(\lambda)$. That is to say, a.a.s. there exists a piece $w$ that appears in relators $r_1, r_2$ with $|w| > \lambda |r_1|$. It is possible that $r_1=r_2$, but the piece should be at different positions.
    
    Construct a van Kampen diagram $D$ by gluing two combinatorial disks with one face, labeled respectively by $r_1$ and $r_2$, along with the paths where the piece $w$ appears (Figure \ref{fig small cancellation}). As $r_1\neq r_2$ or $r_1=r_2$ but the piece appears at different positions, we obtain a reduced van Kampen diagram. The diagram satisfies $|D^{(1)}|=|r_1|+|r_2|+|w|<\ell+\ell+\lambda\ell<(1-\lambda/2)|D|\ell$, which contradicts Theorem \ref{inequality for 2-complex}.
    
    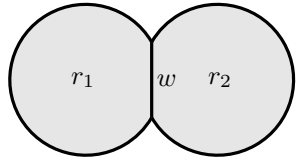
\begin{figure}[h]
    \begin{center}
    \begin{tikzpicture}
            \fill[gray!20] (30:1) circle (1) ;
            \fill[gray!20] (150:1) circle (1) ;
            \draw[very thick] (0,0) -- (0,1);
            \draw[very thick] (0,0) arc (-150:150:1);
            \draw[very thick] (0,1) arc (30:330:1);
            \node at (0.2,0.5) {$w$};
            \node at (0.9,0.5) {$r_2$};
            \node at (-0.9,0.5) {$r_1$};
    \end{tikzpicture}
    \end{center}
        \caption{A van Kampen 2-complex constructed from a $C'(\lambda)$ group.}
        \label{fig small cancellation}
    \end{figure}

    \item Consider a geometric form $Y$ with two faces sharing a common edge of length $\lambda$, the other two edges are of length $1-\lambda$ (Figure \ref{fig no small cancellation}). We have $\dens Y = \frac{2(1-\lambda)+\lambda}{2} > 1-d$, and every sub 2-complex with one face is with density $1>1-d$. So $\dens_c Y >1-d$.
    \begin{figure}[h]
    \begin{center}
    \begin{tikzpicture}
            \fill[gray!20] (30:1) circle (1) ;
            \fill[gray!20] (150:1) circle (1) ;
            \draw[very thick] (0,0) -- (0,1);
            \draw[very thick] (0,0) arc (-150:150:1);
            \draw[very thick] (0,1) arc (30:330:1);
            \node at (0.2,0.5) {$\lambda$};
            \node at (2.3,0.5) {$1-\lambda$};
            \node at (-2.4,0.5) {$1-\lambda$};
    \end{tikzpicture}
    \end{center}
        \caption{The geometric form for the $C'(\lambda)$ condition.}
        \label{fig no small cancellation}
    \end{figure}
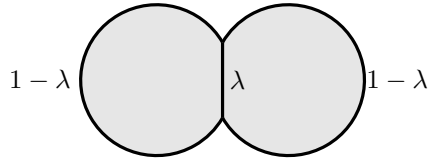

    Let $(Y_\ell)$ be a sequence of 2-complexes of the geometric form $Y$. By Theorem \ref{existence of 2-complexes}, a.a.s. $Y_\ell$ is fillable by $G_\ell(m,d)$, hence a.a.s. $G_\ell(m,d)$ does not satisfy $C'(\lambda)$.
\end{enumerate}

\end{proof}\quad

\paragraph{The $C(p)$ condition.} We shall prove by Theorem \ref{existence of 2-complexes} that for random groups with density, there is a phase transition at density $1/(p+1)$ for the $C(p)$ condition.

\begin{prop}\label{c of p} Let $p\geq 2$ be an integer. Let $(G_\ell(m,d))$ be a sequence of random groups at density $d$. There is a phase transition at density $d = 1/(p+1)$: 
\begin{enumerate}[(i)]
    \item If $d<1/(p+1)$, then a.a.s. $G_\ell(m,d)$ satisfies $C(p)$.
    \item If $d>1/(p+1)$, then a.a.s. $G_\ell(m,d)$ does not satisfy $C(p)$.
\end{enumerate}
\end{prop}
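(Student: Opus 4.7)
The plan is to adapt the template of Proposition \ref{lambda small cancellation plus}, applying Theorem \ref{inequality for 2-complex} for (i) and Theorem \ref{existence of 2-complexes}(ii) for (ii).

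For (i), I argue by contradiction. Assume a.a.s.\ some relator $r$ decomposes as a product $r = w_1 \cdots w_q$ of pieces with $q \leq p$ (the extremal case). Each $w_i$, being a piece, appears at some other cyclic position --- in a distinct relator $r_i$ or at a shifted position of $r$ itself. Form the \emph{$q$-wheel} van Kampen 2-complex $D$: a central face labeled $r$, with outer faces labeled $r_1,\ldots,r_q$ glued to $r$ along the arcs $w_1,\ldots,w_q$. Generically the $r_i$ are pairwise distinct and attached non-degenerately, so $D$ is reduced and of bounded complexity. Since the shared edges cover the whole boundary of $r$ (total length $\ell$), one gets $|D| \leq q+1 \leq p+1$ and $|D^{(1)}| = (q+1)\ell - \ell = q\ell \leq p\ell$. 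Feeding this into Theorem \ref{inequality for 2-complex} with a small $\varepsilon>0$ yields $p\ell \geq (1-d-\varepsilon)(p+1)\ell$, hence $d \geq \tfrac{1}{p+1} - \varepsilon$, contradicting $d < \tfrac{1}{p+1}$.

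For (ii), I exhibit a geometric form $(Y,\lambda)$ whose a.a.s.\ fillability under $d > \tfrac{1}{p+1}$ produces the required violation. Let $Y$ be the \emph{$p$-petalled wheel}: a central face $r_0$ with boundary subdivided into $p$ edges of length $\tfrac{1}{p}$, together with $p$ outer faces $r_1,\ldots,r_p$, where each $r_i$ has boundary consisting of one edge of length $\tfrac{1}{p}$ identified with the $i$-th boundary edge of $r_0$ plus one unshared edge of length $\tfrac{p-1}{p}$. This $Y$ is planar and simply connected, so $Y_\ell$ is fillable by $B_\ell$ for $\ell$ large. A direct computation shows that the sub-2-complex consisting of $r_0$ and $k$ outer faces has density $\tfrac{p + k(p-1)}{p(k+1)}$, a function strictly decreasing in $k$ whose minimum on $\{0,\ldots,p\}$ is $\tfrac{p}{p+1}$ at $k=p$; sub-complexes omitting $r_0$ are disjoint unions of outer faces, each of density $1$. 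Thus $\dens_c(Y) = \tfrac{p}{p+1} > 1-d$, and Theorem \ref{existence of 2-complexes}(ii) furnishes a.a.s.\ a filling of $Y_\ell$ by $G_\ell(m,d)$. In any such filling, each of the $p$ arcs on the boundary of the relator labeling $r_0$ is a common subword with the corresponding outer relator, hence a piece, so this central relator is a product of $p$ pieces, violating $C(p)$.

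The main obstacle lies in (i): one must verify that a reduced van Kampen 2-complex of bounded complexity can be formed from the data $(r, w_1, \ldots, w_q, r_1, \ldots, r_q)$ in degenerate configurations --- several $w_i$ coming from the same $r_i$, some $r_i$ coinciding with $r$ at a cyclic shift, or distinct pieces $w_i$ overlapping cyclically on the boundary of $r$. Such degeneracies only reduce $|D|$, which strengthens rather than weakens the isoperimetric contradiction, but reducedness must be checked with care, exactly as in the proof of Proposition \ref{lambda small cancellation plus}(i).
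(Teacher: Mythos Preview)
Your proposal is correct and follows essentially the same route as the paper: for (i) you build the same $(q+1)$-face wheel and invoke Theorem~\ref{inequality for 2-complex}, and for (ii) you use the identical $p$-petalled geometric form and Theorem~\ref{existence of 2-complexes}(ii), with the same computation $\dens_c(Y)=p/(p+1)$. Your treatment is in fact slightly more explicit than the paper's --- you spell out the monotonicity of $\tfrac{p+k(p-1)}{p(k+1)}$ in $k$, verify fillability by $B_\ell$ via planarity, and honestly flag the reducedness/degeneracy issues in (i) that the paper simply elides --- but there is no substantive difference in strategy.
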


\begin{proof}\quad
\begin{enumerate}[(i)]
    \item Let us prove by contradiction. Suppose that a.a.s. $G_\ell(m,d)$ does not satisfy $C(p)$. That is to say, a.a.s. there exists a reduced van Kampen diagram $D$ with $(p+1)$ faces, one face is placed in the center, attached by the other $p$ faces on the whole boundary, and there is no other attachments (Figure \ref{fig small cancellation p}).
    
    \begin{figure}[h]
    \begin{center}
    \begin{tikzpicture}
    \foreach \x in {1,...,5}{
        \fill[gray!20] (72*\x:1.2) circle (0.7);
        \draw[very thick] (72*\x:1.2) circle (0.7);
        \begin{scope}[shift={(72*\x:1.2)}]
        \end{scope}
    }
    \fill[gray!20] (0,0) circle (1);
    \draw[very thick] (0,0) circle (1);
    \end{tikzpicture} 
    \end{center}
        \caption{A van Kampen 2-complex constructed from a $C(p)$ group.}
        \label{fig small cancellation p}
    \end{figure}
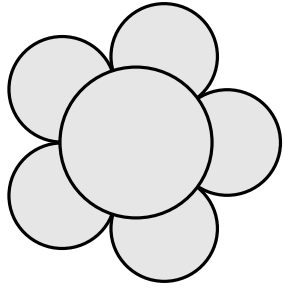

    We have $|D| = p+1$ and $|D^{(1)}|\leq p
    \ell$. Let $\varepsilon = \left(\frac{1}{p+1}-d\right)/2$, we have $|D^{(1)}|\geq (1-d-\varepsilon)|D|\ell$, which contradicts Theorem \ref{inequality for 2-complex} .
    
    \item Consider a geometric form with $p+1$ faces, one of the faces is placed in the center, having $p$ edges of length $1/p$, such that every edge is attached by another face with two edges of lengths $1/p$ and $1-1/p$. There are no other attachments (Figure \ref{fig no small cancellation p}).
    
    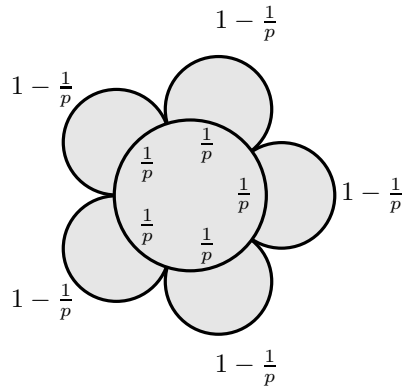
\begin{figure}[h]
    \begin{center}
    \begin{tikzpicture}
    \foreach \x in {1,...,5}{
        \fill[gray!20] (72*\x:1.2) circle (0.7);
        \draw[very thick] (72*\x:1.2) circle (0.7);
        \node at (72*\x:0.75) {$p$};
        \begin{scope}[shift={(72*\x:1.2)}]
        \end{scope}
    }
    \fill[gray!20] (0,0) circle (1);
    \draw[very thick] (0,0) circle (1);
    \foreach \x in {1,...,5}{
        \node at (72*\x:0.7) {$\frac{1}{p}$};
        \node at (72*\x:2.4) {$1-\frac{1}{p}$};
    }
    \end{tikzpicture}
    \end{center}
        \caption{The geometric form for the $C(p)$ condition.}
        \label{fig no small cancellation p}
    \end{figure}

    The density of $Y$ is $\frac{1+p(1-1/p)}{p+1} = p/(p+1)>1-d$. If $Z$ is a sub-2-complex of $Y$ not containing the center face, then $\dens Z=1$. If $Z$ contains the center face and $i\leq p$ other faces, then $\dens Z = \frac{1+i(1-1/p)}{i+1}>1-d$. So $\dens_c Y > 1-d$.

    Let $(Y_\ell)$ be a sequence of 2-complexes of the geometric form $Y$. By Theorem \ref{existence of 2-complexes}, a.a.s. $Y_\ell$ is fillable by $G_\ell(m,d)$, hence a.a.s. $G_\ell(m,d)$ does not satisfy $C(p)$.
\end{enumerate}
\end{proof}\quad

\paragraph{The $B(2p)$ condition.} The same argument holds for the $B(2p)$ condition, introduced in \cite[Definition 1.7]{OW11} by Y. Ollivier and D. Wise: half of a relator can not be the product of fewer than $p$ pieces. One can construct a geometric form with $p$ faces, one of the faces is in the center, with half of its boundary attached by the other $p$ faces, each with length $1/p$ (Figure \ref{fig no small cancellation B2p}). Its critical density is $\frac{p + \frac{1}{2}}{p+1}$, so a phase transition occurs at density $d = \frac{1}{2p+2}$.
    
\begin{prop}\label{b of 2p} Let $p\geq 1$ be an integer. Let $(G_\ell(m,d))$ be a sequence of random groups at density $d$. There is a phase transition at density $d = 1/(2p+2)$: 
\begin{enumerate}[(i)]
    \item If $d<1/(2p+2)$, then a.a.s. $G_\ell(m,d)$ satisfies $B(2p)$.
    \item If $d>1/(2p+2)$, then a.a.s. $G_\ell(m,d)$ does not satisfy $B(2p)$.\qed
\end{enumerate}
\end{prop}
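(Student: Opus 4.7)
The plan is to mirror the two-step strategy used for $C(p)$ in Proposition \ref{c of p}, substituting the geometric form of Figure \ref{fig no small cancellation B2p} for that of Figure \ref{fig no small cancellation p}.

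For part (i), I would argue by contradiction. If $G_\ell(m,d)$ a.a.s.\ fails $B(2p)$, then a.a.s.\ one can exhibit a reduced van Kampen diagram $D$ obtained by gluing $p$ outer faces onto a central face along pieces which together cover exactly one half of the central boundary, with no other identifications. Hence $|D|=p+1$ and $|D^{(1)}|\le (p+\tfrac12)\ell$, since the shared arcs contribute $\ell/2$ once, the unshared half of the central face contributes $\ell/2$, and the unshared arcs of the $p$ outer faces contribute at most $p\ell-\ell/2$. Setting $\varepsilon=\tfrac12\bigl(\tfrac{1}{2p+2}-d\bigr)>0$, the elementary inequality $p+\tfrac12<(1-d-\varepsilon)(p+1)$ (equivalent to $d+\varepsilon<\tfrac{1}{2p+2}$) yields $|D^{(1)}|<(1-d-\varepsilon)|D|\ell$, contradicting Theorem \ref{inequality for 2-complex}.

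For part (ii), I would take $Y$ to be the geometric form of Figure \ref{fig no small cancellation B2p}: a central face carrying $p$ shared edges of length $\tfrac{1}{2p}$ (together covering one half of its boundary) and one further edge of length $\tfrac12$, each shared edge being attached to an outer face whose remaining boundary is a single edge of length $1-\tfrac{1}{2p}$. A direct sum gives $\dens(Y)=\tfrac{p+1/2}{p+1}$. To compute the critical density I would classify the sub-2-complexes $Z\le Y$ into three families: disjoint unions of outer faces (density $1$), the central face alone (density $1$), and the central face together with $i$ outer faces for $1\le i\le p$ (density $\tfrac{2p+i(2p-1)}{2p(i+1)}$, minimized at $i=p$ with value $\tfrac{p+1/2}{p+1}$). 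Hence $\dens_c(Y)=\tfrac{p+1/2}{p+1}$, so the hypothesis $d>\tfrac{1}{2p+2}$ is precisely $\dens_c(Y)>1-d$. Since $Y$ is planar and simply connected, it is fillable by $B_\ell$, and Theorem \ref{existence of 2-complexes} then gives that a.a.s.\ $Y_\ell$ is fillable by $G_\ell(m,d)$, producing a configuration that witnesses the failure of $B(2p)$.

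The point I would expect to be the main (if modest) obstacle is the verification of monotonicity in the third family of sub-2-complexes: one has to check that the density $\tfrac{2p+i(2p-1)}{2p(i+1)}$ is strictly decreasing in $i$, so that the minimum over all sub-2-complexes is attained at $Z=Y$ itself and not at some intermediate complex. A one-line derivative computation settles this, after which the argument is formally identical to that of Proposition \ref{c of p}.
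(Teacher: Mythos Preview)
Your proposal is correct and follows exactly the paper's approach, which merely sketches the argument by pointing to the geometric form of Figure~\ref{fig no small cancellation B2p} and asserting its critical density $\frac{p+1/2}{p+1}$; your explicit enumeration of the sub-2-complex densities (and the monotonicity check showing the minimum occurs at $Z=Y$) fills in precisely what the paper omits, and your edge length $\tfrac{1}{2p}$ is the value that makes the stated critical density come out right. One minor imprecision in part~(i): a failure of $B(2p)$ yields a half-relator that is a product of $q<p$ pieces, so the witnessing diagram has $|D|=q+1\le p$ and $|D^{(1)}|\le (q+\tfrac12)\ell$ rather than $p+1$ faces---but this only strengthens the contradiction, since $\tfrac{1}{2q+2}\ge \tfrac{1}{2p}>\tfrac{1}{2p+2}>d$.
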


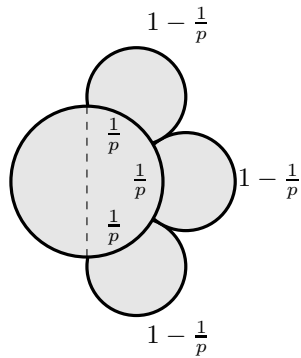
\begin{figure}[h]
    \begin{center}
    \begin{tikzpicture}
    \foreach \x in {5,...,7}{
        \fill[gray!20] (60*\x:1.3) circle (0.65);
        \draw[very thick] (60*\x:1.3) circle (0.65);
        \begin{scope}[shift={(60*\x:1.3)}]
        \end{scope}
    }
    \fill[gray!20] (0,0) circle (1);
    \draw[very thick] (0,0) circle (1);
    \draw[dashed] (0,-1)--(0,1);
    \foreach \x in {5,...,7}{
        \node at (60*\x:0.7) {$\frac{1}{p}$};
        \node at (60*\x:2.4) {$1-\frac{1}{p}$};
    }
    \end{tikzpicture}
    \end{center}
    \caption{The geometric form for the $B(2p)$ condition.}
    \label{fig no small cancellation B2p}
\end{figure}

\printbibliography[heading=bibintoc]
\end{document}